\theoremstyle{plain}
\newtheorem{thm}{Theorem}[section]
\newtheorem{pro}[thm]{Problem}
\newtheorem{lem}[thm]{Lemma}
\theoremstyle{definition}
\newtheorem{defn}[thm]{Definition}
\newtheorem{ass}{Assumption}[section]
\newtheorem{rmk}[thm]{Remark}
\makeatletter\@addtoreset{equation}{section} \makeatother
\begin{document}

	\title{\(H_2/H_\infty\) Control for Continuous-Time Mean-Field Stochastic Systems with Affine Terms}

	\date{}

	\author[a]{Xuling Fang}
	\author[b]{Jun Moon}
	\author[a]{Maoning Tang}
	\author[a]{Qingxin Meng\footnote{Corresponding author.
			\authorcr
			\indent E-mail address:Xuling\_Fang@163.com(X. Fang),
junmoon@hanyang.ac.kr, tmorning@zjhu.edu.cn(M.Tang),  mqx@zjhu.edu.cn (Q.Meng) }}

	\affil[a]{\small{Department of Mathematical Sciences, Huzhou University, Zhejiang 313000,PR  China}}
	
	\affil[b]{\small{Department of Electrical
Engineering, Hanyang University,
Seoul, 04763, South Korea}}

	\maketitle
    \begin{abstract}
        This paper discusses the \( H_2/H_{\infty} \) control problem for continuous-time mean-field linear stochastic systems with affine terms over a finite horizon.
		We employ the Mean-Field Stochastic Bounded Real Lemma (MF-SBRL), which provides the necessary and sufficient conditions to ensure that the \( H_{\infty} \) norm of system perturbations remains below a certain level. By utilizing the Mean-Field Forward-Backward Stochastic Differential Equations (MF-FBSDE), we establish the equivalence conditions for open-loop \( H_2/H_{\infty} \) control strategies. Furthermore, the paper demonstrates that the control problem is solvable under closed-loop conditions if solutions exist for four coupled Difference Riccati Equations (CDREs), two sets of backward stochastic differential equations (BSDEs) and ordinary equations (ODEs). The state-feedback gains for the control strategy can be derived from these solutions, thereby linking the feasibility of open-loop and closed-loop solutions.
    \end{abstract}

	\textbf{Keywords}: Affine-terms, Mean-Field, Coupled Riccati differential equations.

	\maketitle
	
	\section{Introduction}
	From its inception, \(H_2/H_\infty\)
	control theory has been a cornerstone of modern control theory, attracting substantial attention and rigorous scholarly investigation. Its distinctive qualities render it highly promising in the pursuit of an optimal equilibrium between system performance and robustness. In particular, within the domain of stochastic control systems, the	\(H_2/H_\infty\) control challenge has risen to prominence as a key research focus. This theory integrates two distinct control approaches: 	\(H_2\) control, which utilizes the 	\(H_2\) norm of the system as a measure of performance, primarily addressing the system's steady-state performance and dynamic responsiveness; and \(H_\infty\) control, which centers around ensuring the robust stability of the system and enhancing its anti-interference capabilities based on the \(H_\infty\) norm. The \(H_2/H_\infty\)
	control framework aims to achieve a balanced compromise between these two paradigms, ensuring that the system meets its performance requirements while maintaining robustness.
	
	The \(H_2/H_\infty\) mixed control strategy ensures robust steady-state performance in the presence of uncertain disturbances, making it a widely adopted approach in engineering to tackle modern control system challenges. In the domain of stochastic system control, research into \(H_2/H_\infty\) control theory has been increasingly in-depth. Recently, scholars have devised tailored \(H_2/H_\infty\) control methodologies for various stochastic systems, such as Ito-type, discrete-time, and Markov jump systems. These methodologies not only expand the theoretical framework of \(H_2/H_\infty\) control but also offer effective tools for designing practical system controls. Over the years, various approaches have been introduced, encompassing the linear matrix inequality (LMI)(\cite{lin2006h2}, \cite{scherer1997multiobjective}) technique, the convex optimization method (\cite{haddad2022mixed},\cite{rotea1991mixed}), the Nash equilibrium strategy (\cite{limebeer1994nash},\cite{zhang2017stochastic}), and the Stackelberg strategy (\cite{jungers2008stackelberg},\cite{xie2021mixed}). Within this array of approaches, the Nash equilibrium strategy stands out as a traditional method for tackling \(H_2/H_\infty\) mixed control problem. The strategy encompasses two players: one dedicated to minimizing the \(H_2\) norm and the other concerned with the most severe disturbances as characterized by the \(H_\infty\) norm. Early contributions by Limebeer and Anderson \cite{limebeer1994nash} converted the deterministic hybrid \(H_2/H_\infty\) control challenge into finding a Nash equilibrium by proposing two performance metrics: one associated with the specified \(H_\infty\) robustness limit and the other indicating \(H_2\) optimality. Chen and Zhang \cite{chen2004stochastic} broadened these findings to random scenarios with noise that depends on the system state. Following this, the Nash-inspired \(H_2/H_\infty\) control doctrine has been expanded to encompass a wide range of areas, including both continuous and discrete-time frameworks, linear and nonlinear systems, and stochastic It\^{o} and jump systems on finite or infinite horizon (\cite{wang2023stochastic},\cite{yueying2017infinite}-\cite{zhang2017stochastic}).
	
	In \cite{yueying2017infinite}, Liu, Hou and Bai investigated the infinite horizon \(H_2/H_\infty\) optimal control problem for discrete-time infinite Markov jump systems with \((x, u, v)\)-dependent noise, proposing control strategies to balance performance and robustness.
	Furthermore, in \cite{wang2023stochastic}, Wang, Meng, Shen and Peng investigated the stochastic \(H_2/H_\infty\) control problem for mean-field stochastic differential systems with \((x, u, v)\)-correlated noise. By establishing the MF-SBRL, this paper provides the necessary and sufficient conditions for the existence of an \(H_2/H_\infty\) controller and proves that the optimal control input and the worst-case disturbance have a linear feedback form based on the state and its expectation. 	Additionally, in \cite{ma2024open}, Ma , Mou and Daniel addressed the open-loop \(H_2/H_\infty\) control problem for discrete-time mean-field linear stochastic systems. By introducing a cost function and a level of disturbance attenuation, researchers propose a solution that not only integrates the stochastic nature of the system with mean-field theory but also offers new insights into the control design of complex stochastic systems, further enriching the application of \(H_2/H_\infty\) control theory in stochastic systems.
	
	In recent years, the exploration of mean-field stochastic systems has garnered significant attention within the field of control theory. The application of mean-field theory has undergone notable advancements in disciplines such as economics, finance, and engineering technology, particularly in the areas of optimal control and stochastic differential equations. The theoretical foundation of these studies is deeply rooted in Backward Stochastic Differential Equations (BSDEs) and Forward-Backward Stochastic Differential Equations (FBSDEs).
	
	Sun Jingrui in \cite{sun2017mean} mainly discusses the open-loop solvability of the mean-field linear quadratic (LQ) optimal control problem, analyzing the impact of convexity and uniform convexity of the cost functional on the solvability of the problem.
	In \cite{sun2019linear}, Sun Jingrui mainly studies linear-quadratic stochastic two-person nonzero-sum differential games and establishes the existence conditions for open-loop and closed-loop Nash equilibria. The former is related to the solvability of forward-backward stochastic differential equations (FBSDEs), while the latter depends on the solutions to coupled symmetric Riccati differential equations. It provides new theoretical support and methodological guidance for research in related fields.
	
	Research interests in mean-field stochastic differential games (e.g., \cite{absalomhosking2012stochastic}, \cite{bensoussan2013mean}, \cite{lasry2007mean}) and mean-field linear-quadratic control problems, both in continuous-time and discrete-time settings, and under finite-time or infinite-time horizons (e.g., \cite{elliott2013discrete}, \cite{yong2013linear}), have been gaining traction. The research methodology of \cite{yong2013linear} primarily involves using variational methods to explore the linear-quadratic optimal control problems for mean-field stochastic differential equations. It employs decoupling techniques to handle the related coupled systems and ultimately presents the optimal control in the form of feedback control. The main content of \cite{yong2013linear} discusses the open-loop solvability of such problems and its relationship with the solutions of two coupled Riccati equations.
	
Against this backdrop, this paper conducts an in-depth investigation into the stochastic \(H_2/H_\infty\) control problem for continuous-time mean-field stochastic systems with affine terms, using a Nash game-theoretic approach. Compared with existing work (e.g., \cite{wang2023stochastic}), our study presents several distinctive features and novel contributions.

First, the state dynamics considered in this paper are governed by a mean-field stochastic differential equation with affine terms, which provides a more general and realistic modeling framework. Second, we derive necessary and sufficient conditions for the solvability of the open-loop \(H_2/H_\infty\) control problem. Notably, under the mean-field setting with an affine term, open-loop solvability does not necessarily imply closed-loop solvability \textemdash a contrast to the classical case.  We further identify additional structural conditions under which closed-loop solvability can be ensured, thus highlighting the complexity introduced by affine terms in stochastic control design.

The main contributions of this paper are summarized as follows:
\begin{itemize}
  \item \textbf{Explicit open-loop Nash equilibrium representation:} We obtain an explicit characterization of the open-loop Nash equilibrium by solving a coupled system of mean-field forward-backward stochastic differential equations (FBSDEs) along with stationarity conditions. This result fills a gap in the existing literature where the role of affine terms in open-loop control strategies has often been overlooked.
\item \textbf{Systematic closed-loop synthesis framework:} We develop a comprehensive framework for closed-loop synthesis involving four CDREs along with associated BSDEs and ODEs. This leads to a linear-plus-affine state-feedback control law that simultaneously ensures \(H_2\) optimality and \(H_\infty\) robustness, providing practical guidance for high-performance control design in mean-field stochastic systems.
 \item \textbf{Clarifying the distinction between open-loop and closed-loop solvability:} We provide a rigorous analysis revealing that, unlike in classical control settings, open-loop solvability does not in general imply closed-loop solvability for the mean-field \(H_2/H_\infty\) problem with affine terms. Additionally, we clarified the sufficient conditions for achieving closed-loop solvability, providing a basis for the design of stable controllers in complex stochastic environments.
\end{itemize}

	The subsequent content of this paper is organized as follows: In Section 2, after introducing relevant notations, we  reformulated the stochastic \(H_2/H_\infty\) control problem and optimal closed-loop strategy for mean-field \(H_2/H_\infty\) control. Section 3 derives the stochastic mean-field bounded real lemma, which occupies a central position in our analysis of the \(H_2/H_\infty\) control problem. Section 4 delves into the open-loop solvability and closed-loop solvability of the \(H_2/H_\infty\) control problem. Section 5 summarizes our findings and discuss possible directions for future research.
	
	\section{Preliminaries}
	\subsection{Basic Notations}
In this article, we consider a complete filtered probability space \((\Omega, \mathcal{F}, \mathbb{F}, \mathbb{P})\), on which a one-dimensional standard Brownian motion \(W(\cdot) = \{W(t): 0 \leq t \leq T\}\) is defined. Here, \(\Omega\) is the sample space, \(\mathcal{F}\) is a \(\sigma\)-algebra of subsets of \(\Omega\), and \(\mathbb{P}\) is a probability measure. The filtration \(\mathbb{F} = \{\mathcal{F}_t\}_{0 \leq t \leq T}\) is the natural filtration generated by \(W(\cdot)\), augmented to satisfy the usual conditions of right-continuity and completeness.

We adopt the following notations throughout the paper:
\begin{itemize}
    \item \(\mathbb{R}^n\): The \(n\)-dimensional Euclidean space with norm \(|x| = \sqrt{\sum_{i=1}^n x_i^2}\) for \(x = (x_1, \dots, x_n) \in \mathbb{R}^n\).

    \item \(\mathbb{R}^{n \times m}\): The space of all real \(n \times m\) matrices.

    \item \(\langle \alpha, \beta \rangle\): The standard inner product in \(\mathbb{R}^n\), for \(\alpha, \beta \in \mathbb{R}^n\).

    \item \(\langle M, N \rangle := \operatorname{tr}(M^\top N)\): The Frobenius inner product in \(\mathbb{R}^{n \times m}\) for matrices \(M, N \in \mathbb{R}^{n \times m}\). Here, \(\operatorname{tr}(A)\) denotes the trace of matrix \(A\).

    \item \(|M| := \sqrt{\operatorname{tr}(M^\top M)}\): The Frobenius norm of a matrix \(M \in \mathbb{R}^{n \times m}\).

    \item \(I\): The identity matrix of appropriate dimension.

    \item \(\mathbb{S}^n\): The space of real symmetric \(n \times n\) matrices.

    \item \(\mathbb{S}_+^n\): The cone of symmetric positive semi-definite \(n \times n\) matrices in \(\mathbb{S}^n\).

   \item \(C([t,T]; \mathbb{R}^n)\): The space of continuous functions \(X: [t,T] \to \mathbb{R}^n\), equipped with the uniform norm
\[
\|X\|_{C([t,T]; \mathbb{R}^n)} := \sup_{s \in [t,T]} |X(s)| < \infty.
\]

    \item \(C([t,T]; \mathbb{R}^{d \times n})\): The space of continuous functions \(X: [t,T] \to \mathbb{R}^{d \times n}\) satisfying
    \[
        \|X\|_{C([t,T]; \mathbb{R}^{d \times n})} := \sup_{s \in [t,T]} |X(s)| < \infty.
    \]

    \item \(L^2_{\mathbb{F}}(t,T; \mathbb{R}^d)\): The space of \(\mathbb{F}\)-progressively measurable processes \(X: [t,T] \times \Omega \to \mathbb{R}^d\) such that
    \[
        \|X\|^2_{L^2_{\mathbb{F}}(t,T; \mathbb{R}^d)} := \mathbb{E}\left[\int_t^T |X(s)|^2 ds \right] < \infty.
    \]

    \item \(L^2(\Omega, \mathcal{F}_t, \mathbb{P}; \mathbb{R}^n)\): The space of square-integrable \(\mathcal{F}_t\)-measurable random variables \(\xi: \Omega \to \mathbb{R}^n\) satisfying \(\mathbb{E}[|\xi|^2] < \infty\).

    \item \(L^2_{\mathbb{F}}(\Omega; C([t,T]; \mathbb{R}^n))\): The space of \(\mathbb{F}\)-adapted continuous processes \(X: [t,T] \times \Omega \to \mathbb{R}^n\) such that
    \[
        \|X\|_{L^2_{\mathbb{F}}(\Omega; C([t,T]; \mathbb{R}^n))}^2 := \mathbb{E}\left[ \sup_{s \in [t,T]} |X(s)|^2 \right] < \infty.
    \]
\end{itemize}

\subsection{Problem Statement}
This paper investigates the mixed \(H_2/H_{\infty}\) control problem for continuous-time mean-field stochastic systems with affine terms. Such systems naturally arise in engineering, economics, and physics, where they model the collective behavior of a large number of interacting agents. The system dynamics are described by the following mean-field stochastic differential equation (SDE), driven by a one-dimensional Brownian motion \(W(\cdot)\), over a finite time horizon \([0,T]\):
\begin{equation}\label{eq:2.1}
\begin{cases}
	dX(s) = \big\{A_{1}(s)X(s) + \bar{A}_{1}(s)\mathbb{E}[X(s)] + B_{1}(s)u(s) + \bar{B}_{1}(s)\mathbb{E}[u(s)] \\
	\qquad\qquad + C_{1}(s)v(s) + \bar{C}_{1}(s)\mathbb{E}[v(s)] + b(s)\big\}ds \\
	\qquad\qquad + \big\{A_{2}(s)X(s) + \bar{A}_{2}(s)\mathbb{E}[X(s)] + B_{2}(s)u(s) + \bar{B}_{2}(s)\mathbb{E}[u(s)] \\
	\qquad\qquad + C_{2}(s)v(s) + \bar{C}_{2}(s)\mathbb{E}[v(s)] + \sigma(s)\big\}dW(s), \\
	z(s) = \begin{pmatrix} Q(s)X(s) \\ N_1(s)u(s) \end{pmatrix}, \quad s \in [t,T], \\
	X(t) = \xi.
\end{cases}
\end{equation}

Here, the state process \(X(\cdot) \in \mathbb{R}^n\), denoted by \(X(\cdot; t, \xi, u(\cdot), v(\cdot))\), evolves under the control input \(u(\cdot) \in L^2_{\mathbb{F}}(t,T; \mathbb{R}^{n_u})\), the exogenous disturbance \(v(\cdot) \in L^2_{\mathbb{F}}(t,T; \mathbb{R}^{n_v})\), and the initial condition \(\xi \in L^2(\Omega, \mathcal{F}_t, \mathbb{P}; \mathbb{R}^n)\). The system incorporates mean-field interactions via the dependence on the expectations \(\mathbb{E}[X(s)]\), \(\mathbb{E}[u(s)]\), and \(\mathbb{E}[v(s)]\) in both the drift and diffusion terms. The  process \(z(s) \in \mathbb{R}^m\) represents the system's performance output.

We impose the following assumptions to guarantee well-posedness.
\begin{ass}\label{ass:3.1}
The coefficient functions \(A_i(\cdot), \bar{A}_i(\cdot), B_i(\cdot), \bar{B}_i(\cdot), C_i(\cdot), \bar{C}_i(\cdot)\) for \(i=1,2\), as well as the matrix-valued functions \(Q(\cdot)\) and \(N_1(\cdot)\), are deterministic and uniformly bounded on \([0,T]\). More precisely, there exists a constant \(M > 0\) such that
\[
|A_i(s)|,\; |\bar{A}_i(s)|,\; |B_i(s)|,\; |\bar{B}_i(s)|,\; |C_i(s)|,\; |\bar{C}_i(s)|,\; |Q(s)|,\; |N_1(s)| \leq M,\qquad \forall s\in [0,T].
\]
\end{ass}
\begin{ass}\label{ass:3.2}
The affine terms \(b(\cdot), \sigma(\cdot)\)  \(\in L^2_{\mathbb{F}}(t,T; \mathbb{R}^n)\).
\end{ass}

\begin{ass}\label{ass:3.3}
The matrix \(N_1(\cdot)\) satisfies the orthonormality condition
\[
N_1(s)^\top N_1(s) = I, \quad \forall s \in [0,T],
\]
where \(I\) denotes the identity matrix of appropriate dimension.
\end{ass}

Under Assumptions~\ref{ass:3.1} and~\ref{ass:3.2}, it follows from Proposition 2.1 in \cite{sun2017mean} that for any admissible input triple \((\xi, u(\cdot), v(\cdot)) \in L^2(\Omega, \mathcal{F}_t; \mathbb{R}^n) \times L^2_{\mathbb{F}}(t,T; \mathbb{R}^{n_u}) \times L^2_{\mathbb{F}}(t,T; \mathbb{R}^{n_v})\), the state equation \eqref{eq:2.1} admits a unique strong solution \(X(\cdot) \in L^2_{\mathbb{F}}(\Omega; C(t,T; \mathbb{R}^n))\), which depends continuously on the initial data and system coefficients. Moreover, the following a priori estimate holds.
	\begin{lem}
	For any \((t, \xi) \in [0,T] \times L^2(\Omega,\mathbb{P},\mathcal{F}_t;\mathbb{R}^n)\), \(u(\cdot) \in L^2_{\mathbb{F}}(t,T; \mathbb{R}^{n_u})\), and \(v(\cdot) \in L^2_{\mathbb{F}}(t,T; \mathbb{R}^{n_v})\), system \eqref{eq:2.1} admits a unique solution \(X(\cdot) \in L^2_{\mathbb{F}}(\Omega;C(t,T;\mathbb{R}^n))\) satisfying
	\[
	\mathbb{E}\left[ \sup_{s \in [t,T]} |X(s)|^2 \right] \leq M \left( \mathbb{E}|\xi|^2 + \mathbb{E} \int_t^T \left( |u(s)|^2 + |v(s)|^2 + |b(s)|^2 + |\sigma(s)|^2 \right) ds \right),
	\]
	where \(M > 0\) is a constant independent of \(t, \xi, u(\cdot), v(\cdot), b(\cdot)\), and \(\sigma(\cdot)\).
\end{lem}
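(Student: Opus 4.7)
The existence and uniqueness of a strong solution $X(\cdot) \in L^2_{\mathbb{F}}(\Omega; C(t,T; \mathbb{R}^n))$ is already granted by the reference to Proposition~2.1 in \cite{sun2017mean}, so the plan is to concentrate on deriving the stated a priori estimate. I would start from It\^{o}'s formula applied to $|X(s)|^2$ on an arbitrary subinterval $[t,r]\subseteq [t,T]$, rewrite the resulting expression as the sum of a drift integral, a quadratic-variation integral from the diffusion, and a local-martingale stochastic integral, and then apply a standard localization/stopping time argument so that taking expectation kills the stochastic-integral piece.

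Next I would estimate each of the drift and diffusion integrands by Cauchy--Schwarz and Young's inequality (i.e.\ $2\langle X,\eta\rangle \leq \varepsilon|X|^2+\varepsilon^{-1}|\eta|^2$), using Assumption~\ref{ass:3.1} to bound the deterministic coefficients by $M$. The mean-field terms $\mathbb{E}[X(s)],\mathbb{E}[u(s)],\mathbb{E}[v(s)]$ are handled by Jensen's inequality, so that
\[
|\mathbb{E}[X(s)]|^2 \leq \mathbb{E}|X(s)|^2,\quad |\mathbb{E}[u(s)]|^2 \leq \mathbb{E}|u(s)|^2, \quad |\mathbb{E}[v(s)]|^2 \leq \mathbb{E}|v(s)|^2,
\]
which lets them be absorbed into the corresponding norm terms without introducing extra dependence. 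The affine terms $b(\cdot),\sigma(\cdot)$ drop straight into the $|b|^2+|\sigma|^2$ contribution by the same Young's inequality. After grouping, one obtains an integral inequality of the form
\[
\mathbb{E}|X(r)|^2 \leq C_1 \mathbb{E}|\xi|^2 + C_2 \mathbb{E}\!\int_t^T\!\!\bigl(|u|^2+|v|^2+|b|^2+|\sigma|^2\bigr)ds + C_3 \!\int_t^r \mathbb{E}|X(s)|^2 ds,
\]
to which Gronwall's lemma applies and yields a pointwise-in-$r$ bound on $\mathbb{E}|X(r)|^2$.

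To upgrade this to the supremum-in-time estimate, I would go back to the It\^{o} expansion of $|X(s)|^2$ and, before taking expectations, take the supremum over $s\in[t,T]$ inside the expectation. The stochastic-integral term is then controlled by the Burkholder--Davis--Gundy inequality: its expected supremum is bounded by a constant times the expected quadratic variation, which in turn is bounded (via Cauchy--Schwarz and boundedness of coefficients) by $\varepsilon\,\mathbb{E}\!\sup_{s}|X(s)|^2$ plus $C_\varepsilon$ times the right-hand side of the desired estimate. Choosing $\varepsilon$ small enough to absorb the supremum on the left, combined with the Gronwall bound on $\mathbb{E}|X(s)|^2$ already obtained, produces the asserted inequality, with a constant $M$ depending only on the uniform bound on the coefficients and on $T$, not on $(t,\xi,u,v,b,\sigma)$.

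The steps are all standard and the main bookkeeping obstacle is simply keeping track of the many coefficient terms; the only conceptually delicate point is the BDG absorption step and the Jensen bound for the mean-field quantities, but neither presents a genuine difficulty. No result beyond It\^{o}'s formula, Jensen, Cauchy--Schwarz, BDG, and Gronwall is required, and Assumptions~\ref{ass:3.1}--\ref{ass:3.2} provide exactly the uniform boundedness and square-integrability needed to make every bound finite.
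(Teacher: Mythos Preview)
Your proposal is correct and follows the standard route (It\^{o}'s formula, Jensen for the mean-field terms, Young's inequality, Gronwall, and a BDG absorption to upgrade to the sup-in-time bound). The paper itself does not give a proof of this lemma at all: it simply attributes the result to Proposition~2.1 of \cite{sun2017mean} in the sentence preceding the lemma and then states the estimate without argument. So there is nothing to compare against beyond noting that your sketch is exactly the kind of argument underlying the cited proposition.
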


	This paper will consider the synthesis of a closed-loop control strategy for a finite-horizon mean-field \( H_2/H_\infty \) control problem.  The goal is to minimize the expected output energy while guaranteeing a prescribed disturbance attenuation level. We now proceed to formally define the performance criteria and the closed-loop controller design problem.

	\begin{defn}\label{def:1} \textbf{(Mean-Field \(H_2/H_\infty\) Optimal Closed-Loop Strategy)}
Consider the controlled mean-field system \eqref{eq:2.1} on the interval \([t, T]\), with a prescribed disturbance attenuation level \(\gamma > 0\). Define the admissible closed-loop control strategy spaces as:
\[
\mathcal{N}^2[t, T] \triangleq C(t, T ;\, \mathbb{R}^{n_{u}\times n}) \times C(t, T ; \mathbb{R}^{n_{u}\times n}) \times L_{\mathbb{F}}^{2}(t, T ; \mathbb{R}^{n_{u}}),
\]
\[
\mathcal{M}^2[t, T] \triangleq C(t, T ; \mathbb{R}^{n_{v}\times n}) \times C(t, T ; \mathbb{R}^{n_{v}\times n}) \times L_{\mathbb{F}}^{2}(t, T ; \mathbb{R}^{n_{v}}).
\]
We say that the \(H_2/H_\infty\) problem admits an \emph{optimal closed-loop strategy} if there exists a pair of control strategy
\[
\left(U(\cdot), \bar{U}(\cdot), U_0(\cdot); V(\cdot), \bar{V}(\cdot), V_0(\cdot)\right) \in \mathcal{N}^2[t, T] \times \mathcal{M}^2[t, T],
\]
such that the following two conditions hold:
\begin{itemize}
	\item[(i)] (\textbf{\(H_\infty\) Robustness}) Consider the closed-loop system:
	\begin{equation}\label{eq:closed_loop}
		\begin{cases}
			\begin{aligned}
				dX(s) &= \Big[ \left(A_1(s) + B_1(s)U(s)\right)X(s) + \left(\bar{A}_1(s) + B_1(s)\bar{U}(s) + \bar{B}_1(s)\widetilde{U}(s)\right)\mathbb{E}[X(s)] \\
				&\quad + C_1(s)v(s) + \bar{C}_1(s)\mathbb{E}[v(s)] \Big] ds \\
				&\quad + \Big[ \left(A_2(s) + B_2(s)U(s)\right)X(s) + \left(\bar{A}_2(s) + B_2(s)\bar{U}(s) + \bar{B}_2(s)\widetilde{U}(s) \right)\mathbb{E}[X(s)] \\
				&\quad + C_2(s)v(s) + \bar{C}_2(s)\mathbb{E}[v(s)] \Big] dW(s), \\
				z(s) &= \begin{pmatrix} Q(s)X(s) \\ N_1(s)\left(U(s)X(s) + \bar{U}(s)\mathbb{E}[X(s)]\right) \end{pmatrix}, \\
				X(t) &= 0,
			\end{aligned}
		\end{cases}
	\end{equation}
	where \(\widetilde{U}(s)=U(s)+\bar{U}(s)\),
	and define the linear operator \(\mathbf{L}: L_{\mathbb{F}}^2(t, T; \mathbb{R}^{n_v}) \rightarrow L_{\mathbb{F}}^2(t, T; \mathbb{R}^{n_z})\) by \(\mathbf{L}(v(\cdot)) \triangleq z(\cdot)\). The operator norm of \(\|\mathbf{L}\|\) is given by
	\begin{equation}\label{eq:L_norm}
		\|\mathbf{L}\| \triangleq \sup_{\substack{v(\cdot) \neq 0 \\ v \in L_{\mathbb{F}}^2([t, T]; \mathbb{R}^{n_v})}} \frac{\|z\|_{[t,T]}}{\|v\|_{[t,T]}},
	\end{equation}
	which satisfies \(\|\mathbf{L}\| < \gamma\),
	where
	\[
	\|v\|_{[t,T]} \triangleq \left(\mathbb{E} \int_t^T |v(s)|^2 ds\right)^{1/2}, \quad \|z\|_{[t,T]} \triangleq \left(\mathbb{E} \int_t^T \left(|Q(s)X(s)|^2 + |U(s)X(s) + \bar{U}(s)\mathbb{E}[X(s)]|^2\right) ds\right)^{1/2}.
	\]
	\item[(ii)] (\textbf{\(H_2\) Optimality}) Consider the system:
	\begin{equation}\label{eq:2.4}
		\begin{cases}
			dX(s) &= \Big[ A_1(s) + C_1(s)V(s) \Big]X(s) + \Big[ \bar{A}_1(s) + C_1(s)\bar{V}(s) + \bar{C}_1(s)\widetilde{V}(s) \Big] \mathbb{E}[X(s)] \\
			&\quad + B_1(s)u(s) + \bar{B}_1(s)\mathbb{E}[u(s)] + b(s) +C_1(s)V_0(s)+ \bar{C}_1(s)\mathbb{E}[V_0(s)] \, ds \\
			&\quad + \Big[ A_2(s) + C_2(s)V(s) \Big]X(s) + \Big[ \bar{A}_2(s) + C_2(s)\bar{V}(s) + \bar{C}_2(s)\widetilde{V}(s) \Big] \mathbb{E}[X(s)] \\
			&\quad + B_2(s)u(s) + \bar{B}_2(s)\mathbb{E}[u(s)] + \sigma(s) +C_2(s)V_0(s)+ \bar{C}_2(s)\mathbb{E}[V_0(s)] \, dW(s), \\
			z(s) &= \begin{pmatrix} Q(s)X(s) \\ N_1(s)u(s) \end{pmatrix}, \quad N_1(s)^\top N_1(s) = I, \\
			X(t) &= \xi.
		\end{cases}
	\end{equation}
	where \(\widetilde{V}(s)=V(s)+\bar{V}(s)\), and define the cost functional
	\[
	J_2(t, \xi; u(\cdot), v(\cdot)) \triangleq \|z\|_{[t, T]}^2 = \mathbb{E} \int_t^T |z(s)|^2 \, ds.
	\]
	
	Consider the closed-loop control laws in linear feedback form:
	\[
	\begin{aligned}
		u^*(s) &= U(s)X^*(s) + \bar{U}(s)\mathbb{E}[X^*(s)] + U_0(s), \\
		v^*(s) &= V(s)X^*(s) + \bar{V}(s)\mathbb{E}[X^*(s)],
	\end{aligned}
	\]
	where \(X^*(\cdot)\) is the state trajectory corresponding to the control pair \((u^*(\cdot), v^*(\cdot))\). These controls minimize the cost functional \(J_2\) in the sense that for any \(u(\cdot) \in L_{\mathbb{F}}^2(t, T; \mathbb{R}^{n_u})\),
	\[
J_2(t, \xi; u^*(\cdot), v^*(\cdot)) \leq J_2\left(t, \xi; u(\cdot), V(\cdot)X(\cdot) + \bar{V}(\cdot)\mathbb{E}[X(\cdot)] + V_0(\cdot) \right).
\]
\end{itemize}
\end{defn}

	To ensure condition (i) in Definition~\ref{def:1}, namely that the linear perturbation operator \(\mathbf{L}\) satisfies \(\|\mathbf{L}\| < \gamma\) for some \(\gamma > 0\), we introduce the performance functional
\[
J_1(t, \xi; u(\cdot), v(\cdot)) = \mathbb{E} \int_t^T \left[ \gamma^2  |v(s)|^2 - \langle Q(s)^\top Q(s) X(s), X(s) \rangle -  |u(s)|^2 \right] ds,
\]
subject to the state equation~\eqref{eq:2.1}. In particular, when the affine terms \( b(\cdot), \sigma(\cdot) = 0 \), the performance index is denoted by \( J_1^0(t,\xi;u(\cdot),v(\cdot)) \) and \( J_2^0(t,\xi;u(\cdot),v(\cdot)) \) .

Minimizing \(J_1\) enforces the \(H_\infty\) robustness criterion by bounding the worst-case energy gain from disturbance \(v(\cdot)\) to output \(z(\cdot)\). Meanwhile, the previously defined \(J_2\) characterizes the \(H_2\) performance, representing nominal output energy. Minimizing \(J_2\) achieves optimal regulation and energy efficiency. Combining these leads to a unified \(H_2/H_\infty\) control problem, which can be formulated as
the following {nonzero-sum differential game}.

\begin{pro}\textbf{[Problem (NSDG-\(H_2/H_\infty\))]}\label{def:NSDG}
We seek a pair of closed-loop strategies
\[
\left(U(\cdot), \bar{U}(\cdot), U_0(\cdot); V(\cdot), \bar{V}(\cdot), V_0(\cdot)\right) \in \mathcal{N}^2[t, T] \times \mathcal{M}^2[t, T],
\]
such that for any initial state \(\xi \in L^2(\Omega,\mathbb{P}, \mathcal{F}_t; \mathbb{R}^n)\), the following two inequalities are satisfied:\\
(i)	For all \(v(\cdot) \in L^2_{\mathbb{F}}(t, T; \mathbb{R}^{n_v})\),\begin{equation}\label{eq:2.5}
	J_1(t, \xi; u^*(\cdot), v^*(\cdot)) \leq J_1\left(t, \xi; U(\cdot)X(\cdot) + \bar{U}(\cdot)\mathbb{E}[X(\cdot)] + U_0(\cdot), v(\cdot)\right).
	\end{equation}
(ii)	For all \(u(\cdot) \in L^2_{\mathbb{F}}(t, T; \mathbb{R}^{n_u})\), \begin{equation} \label{eq:2.6}
	J_2(t, \xi; u^*(\cdot), v^*(\cdot)) \leq J_2\left(t, \xi; u(\cdot), V(\cdot)X(\cdot) + \bar{V}(\cdot)\mathbb{E}[X(\cdot)] + V_0(\cdot)\right).
	\end{equation}

Here, the closed-loop controls $(u^*(\cdot), v^*(\cdot))$ are defined as
\begin{equation}\label{eq:closed_loop_controls}
\begin{aligned}
u^*(\cdot) &= U(\cdot)X^*(\cdot) + \bar{U}(\cdot)\mathbb{E}[X^*(\cdot)] + U_0(\cdot), \\
v^*(\cdot) &= V(\cdot)X^*(\cdot) + \bar{V}(\cdot)\mathbb{E}[X^*(\cdot)] + V_0(\cdot).
\end{aligned}
\end{equation}

\end{pro}

 In \eqref{eq:closed_loop_controls},  \(X^*(\cdot)\) is the corresponding state trajectory under $(u^*(\cdot), v^*(\cdot))$. In \eqref{eq:2.5}, \(X(\cdot)\) denotes the state trajectory of system~\eqref{eq:2.1} under the control
\(u(\cdot) = U(\cdot)X(\cdot) + \bar{U}(\cdot)\mathbb{E}[X(\cdot)] + U_0(\cdot),\)
and an arbitrary disturbance input \(v(\cdot)\in L^2_{\mathbb{F}}(t, T; \mathbb{R}^{n_v})\).

Similarly, in \eqref{eq:2.6}, \(X(\cdot)\) is the trajectory of system~\eqref{eq:2.1} corresponding to
\(v(\cdot) = V(\cdot)X(\cdot) + \bar{V}(\cdot)\mathbb{E}[X(\cdot)] + V_0(\cdot)\),
and an arbitrary control input \(u(\cdot)\in L^2_{\mathbb{F}}(t, T; \mathbb{R}^{n_u})\).

A closed-loop strategy tuple
\[
\left(U(\cdot), \bar{U}(\cdot), U_0(\cdot); V(\cdot), \bar{V}(\cdot), V_0(\cdot)\right)
\]
that satisfies the above conditions is called a \emph{closed-loop Nash equilibrium strategy}, and the corresponding pair \((u^*(\cdot), v^*(\cdot))\) is referred to as a \emph{closed-loop Nash equilibrium}.

\section{A Bounded Real Lemma via Coupled Riccati Equations for Mean-Field Stochastic Systems}
In this section, we establish a stochastic bounded real lemma for mean-field systems with affine terms. This result forms a cornerstone in the analysis of stochastic system stability and robust control. In particular, it provides necessary and sufficient conditions under which the \(H_\infty\) norm of the system is bounded by a prescribed disturbance attenuation level \(\gamma\), thereby guaranteeing robustness against stochastic perturbations. The characterization is based on the solvability of a system of coupled Riccati differential equations, and serves as a foundation for the synthesis of \(H_2/H_\infty\) controllers as well as for ensuring the stability of the closed-loop system.

We consider the mean-field stochastic system described in equation~\eqref{eq:2.1} under zero control input, i.e., \(u(\cdot) = 0\) over the interval \([t, T]\). For an initial condition \((t, \xi) \in [0, T] \times L^2(\Omega, \mathbb{P}, \mathcal{F}_t; \mathbb{R}^n)\), the system reduces to the following controlled stochastic differential equation:
\begin{equation}\label{eq:1}
\left\{
\begin{aligned}
dX(s) &= \Big[A_1(s)X(s) + \bar{A}_1(s)\mathbb{E}[X(s)] + C_1(s)v(s) + \bar{C}_1(s)\mathbb{E}[v(s)] + b(s)\Big] ds \\
&\quad + \Big[A_2(s)X(s) + \bar{A}_2(s)\mathbb{E}[X(s)] + C_2(s)v(s) + \bar{C}_2(s)\mathbb{E}[v(s)] + \sigma(s)\Big] dW(s), \quad s \in [t, T], \\
z(s) &= Q(s)X(s), \\
X(t) &= \xi.
\end{aligned}
\right.
\end{equation}

This equation describes the evolution of the state \(X(s)\) under the influence of stochastic disturbances \(v(s)\), system dynamics, noise, and affine terms \(b(s)\), \(\sigma(s)\). The output \(z(s)\) is defined as a linear transformation of the state through the matrix \(Q(s)\).

To quantify the system's sensitivity to external disturbances, we define the following perturbation operator and associated norm.

\begin{defn}\label{def:2}
For the system~\eqref{eq:1}, define the operator \(L\) : \(L^2_{\mathbb{F}}(t, T; \mathbb{R}^{n_v})\) \(\rightarrow L^2_{\mathbb{F}}(t, T; \mathbb{R}^{n_z})\), by
\[
(Lv)(s) \triangleq z(s) = Q(s)X(s) \big|_{X(t) = 0,\ b = 0,\ \sigma = 0}, \quad s \in [t, T].
\]
The induced operator norm of \(L\), known as the system's \(H_\infty\) norm, is given by:
\begin{equation} \label{eq:3.200}
\|L\| \triangleq \sup_{\substack{v(\cdot) \in L^2_{\mathbb{F}}(t, T; \mathbb{R}^{n_v}) \\ v \neq 0}} \frac{\|Lv\|}{\|v\|},
\end{equation}
where
\[
\|Lv\| \triangleq \left( \mathbb{E} \int_t^T X(s)^\top Q(s)^\top Q(s) X(s) \, ds \right)^{1/2}, \quad
\|v\| \triangleq \left( \mathbb{E} \int_t^T v(s)^\top v(s) \, ds \right)^{1/2}.
\]
The \(H_\infty\) norm measures the worst-case amplification from the disturbance \(v(\cdot)\) to the output \(z(\cdot)\). A smaller norm implies enhanced robustness, indicating that the system's output remains relatively insensitive to perturbations.
\end{defn}

In stochastic \(H_\infty\) control, robustness analysis is often reformulated as the minimization of a quadratic cost functional. For an initial condition \((t, \xi) \in [0, T] \times L^2(\Omega, \mathbb{P}, \mathcal{F}_t; \mathbb{R}^n)\), the robust performance index is defined as:
\begin{equation}\label{eq:Hinf-cost}
\begin{aligned}
\min_{v(\cdot) \in L^2_{\mathbb{F}}(t, T; \mathbb{R}^{n_v})} J_1(t, \xi; 0, v(\cdot)) &\triangleq \min_{v(\cdot)\in L^2_{\mathbb{F}}(t, T; \mathbb{R}^{n_v})} \mathbb{E} \int_t^T \left[ \gamma^2 |v(s)|^2 - |z(s)|^2 \right] ds \\
&= \min_{v(\cdot)\in L^2_{\mathbb{F}}(t, T; \mathbb{R}^{n_v})} \mathbb{E} \int_t^T \left[ \langle \gamma^2 v(s), v(s) \rangle - \langle Q(s)^\top Q(s) X(s), X(s) \rangle \right] ds.
\end{aligned}
\end{equation}

In this formulation, \(\gamma^2 |v(s)|^2\) represents the weighted energy of the disturbance, while \(|z(s)|^2\) quantifies the energy of the system output. By minimizing \(J_1\), one seeks an optimal disturbance input \(v(\cdot)\) that achieves the best trade-off between disturbance energy and output energy. This approach can provide an equivalent characterization of the stochastic \(H_\infty\) norm bound and leads to what is referred to as the \textbf{\(H_\infty\) Optimal Disturbance Suppression Problem}.

The cost functional~\eqref{eq:Hinf-cost} corresponds to an indefinite linear-quadratic form, due to the positive weighting on the disturbance and the negative weighting on the state. This leads to an indefinite LQ formulation.

 To characterize the optimality conditions and facilitate robustness analysis, we derive a system of CDREs, which arise from the underlying stochastic LQ structure.

The CDREs are given by:
\begin{equation}\label{eq:3.2}
\begin{cases}
\dot{P} + PA_1 + A_1^\top P + A_2^\top P A_2 - Q^\top Q- (PC_1 + A_2^\top P C_2)(\gamma^2 I + C_2^\top P C_2)^{-1}(C_1^\top P + C_2^\top P A_2) = 0, \\[1ex]
\dot{\Pi} + \Pi\widetilde{A}_1 + \widetilde{A}_1^\top \Pi + \widetilde{A}_2^\top P \widetilde{A}_2 - Q^\top Q - (\Pi\widetilde{C}_1 + \widetilde{A}_2^\top P \widetilde{C}_2)(\gamma^2 I + \widetilde{C}_2^\top P \widetilde{C}_2)^{-1}(\widetilde{C}_1^\top \Pi + \widetilde{C}_2^\top P \widetilde{A}_2) = 0, \\[1ex]
\Lambda^\gamma(P) := \gamma^2 I + C_2^\top P C_2 \geq \delta I, \quad
\bar{\Lambda}^\gamma(P) := \gamma^2 I + \widetilde{C}_2^\top P \widetilde{C}_2 \geq \delta I, \quad \text{for some } \delta > 0, \\[1ex]
P(T) = 0, \quad \Pi(T) = 0,
\end{cases}
\end{equation}
where \(\widetilde{A}_i := A_i + \bar{A}_i\) and \(\widetilde{C}_i := C_i + \bar{C}_i\), for \(i = 1, 2\). Here, we have omitted the explicit time dependence of \(P(s)\), \(\Pi(s)\), and other related functions throughout the section unless required for clarity.

The CDREs describe the backward-in-time evolution of the matrix-valued functions \(P(s)\) and \(\Pi(s)\), which encode information about the variance and the mean of the system state, respectively. The uniform positive definiteness conditions \(\Lambda^\gamma(P) \geq \delta I\) and \(\bar{\Lambda}^\gamma(P) \geq \delta I\) ensure the invertibility of the associated gain matrices and the well-posedness of the feedback control law. The terminal conditions \(P(T) = 0\) and \(\Pi(T) = 0\) stem from the fact that the performance functional~\eqref{eq:Hinf-cost} does not include any terminal penalty on the state.

For notational convenience, we introduce the following brief expressions:
\begin{equation}\label{eq:3.3}
\begin{cases}
\Phi(P) := PC_1 + A_2^\top PC_2, \quad \bar{\Phi}(P, \Pi) := \Pi \widetilde{C}_1 + \widetilde{A}_2^\top P \widetilde{C}_2, \\[1ex]
\Lambda^\gamma(P) := \gamma^2 I + C_2^\top P C_2, \quad \bar{\Lambda}^\gamma(P) := \gamma^2 I + \widetilde{C}_2^\top P \widetilde{C}_2, \\[1ex]
\varphi := C_1^\top \eta + C_2^\top (P\sigma + \zeta), \quad \bar{\varphi} := \widetilde{C}_1^\top \mathbb{E}[X] + \widetilde{C}_2^\top (P\mathbb{E}[\sigma] + \mathbb{E}[\zeta]).
\end{cases}
\end{equation}

Now we can present the stochastic bounded real lemma for mean-field stochastic systems with affine terms. The following result establishes a necessary and sufficient condition for the \(H_\infty\) norm of the system to be less than a prescribed disturbance attenuation level \(\gamma\), based on the solvability of the CDREs \eqref{eq:3.2}.

\begin{lem}\label{lem:3.3}    Let Assumptions \ref{ass:3.1}-\ref{ass:3.3} holds.
Given a disturbance attenuation level \(\gamma > 0\), the system~\eqref{eq:1} satisfies \(\|L\| < \gamma\) if and only if the coupled Riccati equations~\eqref{eq:3.2} admit a pair of solutions \((P(\cdot), \Pi(\cdot)) \in C([t, T]; \mathbb{S}^n) ^2\).

Moreover, the following backward stochastic differential equation (BSDE):
\begin{equation} \label{eq:3.40}
\begin{cases}
d\eta(s) = - \left[ (A_1 - C_1 \Lambda^\gamma(P)^{-1} \Phi(P))^\top \eta(s) + (A_2 - C_2 \Lambda^\gamma(P)^{-1} \Phi(P))^\top (P\sigma + \zeta) + P b \right] ds + \zeta(s) dW(s), \\
\eta(T) = 0,
\end{cases}
\end{equation}
and the ordinary differential equation (ODE):
\begin{equation} \label{eq:3.50}
\begin{cases}
\dot{\bar{\eta}}(s) + \left[ \widetilde{A}_1 - \widetilde{C}_1 \bar{\Lambda}^\gamma(P)^{-1} \bar{\Phi}(P, \Pi) \right]^\top \bar{\eta}(s) \\
\qquad - \bar{\Phi}(P, \Pi) \bar{\Lambda}^\gamma(P)^{-1} \widetilde{C}_2 (P\mathbb{E}[\sigma] + \mathbb{E}[\zeta]) + \widetilde{A}_2^\top (P\mathbb{E}[\sigma] + \mathbb{E}[\zeta]) + \Pi \mathbb{E}[b] = 0, \\
\bar{\eta}(T) = 0,
\end{cases}
\end{equation}
admit solutions \((\eta(\cdot), \zeta(\cdot)) \in L^2_{\mathbb{F}}(\Omega; C([t, T]; \mathbb{R}^n)) \times L^2_{\mathbb{F}}(t, T; \mathbb{R}^n)\) and \(\bar{\eta}(\cdot) \in L^2(t, T; \mathbb{R}^{n})\).

The corresponding worst-case disturbance input for the \textbf{\(H_\infty\) optimal disturbance suppression problem} is given by:
\begin{equation}\label{eq:3.6}
\begin{aligned}
v^*(s) &= - \Lambda^\gamma(P)^{-1} \Phi(P)^\top (X(s) - \mathbb{E}[X(s)]) \\
&\quad - \bar{\Lambda}^\gamma(P)^{-1} \bar{\Phi}(P, \Pi)^\top \mathbb{E}[X(s)] - \Lambda^\gamma(P)^{-1} \varphi(s) - \bar{\Lambda}^\gamma(P)^{-1} \bar{\varphi}(s).
\end{aligned}
\end{equation}

Furthermore, the corresponding optimal cost is expressed as:
\begin{equation}\label{eq:3.7}
\begin{aligned}
J_1(t, \xi; 0, v^*(\cdot)) =\ & \mathbb{E} \left[ \langle P(\xi - \mathbb{E}[\xi]) + 2\eta(t), \xi - \mathbb{E}[\xi] \rangle \right] + \langle \Pi \mathbb{E}[\xi] + 2\bar{\eta}(t), \mathbb{E}[\xi] \rangle \\
& + \mathbb{E} \int_t^T \Big\{ \langle P\sigma, \sigma \rangle + 2\langle \eta, b - \mathbb{E}[b] \rangle + 2\langle \zeta, \sigma \rangle + 2\langle \bar{\eta}, \mathbb{E}[b] \rangle \\
& \qquad - \langle \varphi, \Lambda^\gamma(P)^{-1} \varphi \rangle - \langle \bar{\varphi}, \bar{\Lambda}^\gamma(P)^{-1} \bar{\varphi} \rangle \Big\} ds.
\end{aligned}
\end{equation}
\end{lem}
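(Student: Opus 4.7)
The plan is to prove the equivalence in two directions, both resting on an Itô-based completion of squares for a carefully chosen quadratic test functional. The mean-field coupling forces the orthogonal decomposition $X = \widehat X + \mathbb{E}[X]$ with $\widehat X := X - \mathbb{E}[X]$, producing two separate Riccati equations: one for the centered part governed by $P$, and one for the mean part governed by $\Pi$ with the tilded coefficients $\widetilde A_i, \widetilde C_i$. The affine drivers $b, \sigma$ are absorbed into the adapted pair $(\eta, \zeta)$ and the deterministic $\bar\eta$ solving \eqref{eq:3.40}--\eqref{eq:3.50}.

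\textbf{Sufficiency.} Given solutions $(P, \Pi)$, $(\eta, \zeta)$ and $\bar\eta$ of the CDREs and auxiliary equations, I would apply Itô's formula to the test functional
\[
V(s) := \langle P(s)\widehat X(s), \widehat X(s)\rangle + \langle \Pi(s)\mathbb{E}[X(s)], \mathbb{E}[X(s)]\rangle + 2\langle \eta(s), \widehat X(s)\rangle + 2\langle \bar\eta(s), \mathbb{E}[X(s)]\rangle
\]
along the trajectory of \eqref{eq:1}, integrate from $t$ to $T$, and take expectation. The terminal values $P(T) = \Pi(T) = \eta(T) = \bar\eta(T) = 0$ kill the endpoint contribution. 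The two Riccati equations \eqref{eq:3.2} are precisely the coefficients required to cancel the quadratic-in-$\widehat X$ and quadratic-in-$\mathbb{E}[X]$ residues appearing in the resulting drift, while \eqref{eq:3.40}--\eqref{eq:3.50} eliminate the linear-in-$\widehat X$ and linear-in-$\mathbb{E}[X]$ residues generated by the affine inputs. Rearranging $J_1(t,\xi;0,v)$ against this identity produces
\[
J_1(t, \xi; 0, v) = \mathcal{K}(t, \xi) + \mathbb{E}\int_t^T \Bigl[\bigl\langle \Lambda^\gamma(P)\alpha(s), \alpha(s)\bigr\rangle + \bigl\langle \bar\Lambda^\gamma(P)\beta(s), \beta(s)\bigr\rangle\Bigr] ds,
\]
where
\[
\alpha(s) := \bigl(v(s) - \mathbb{E}[v(s)]\bigr) - \bigl(v^*(s) - \mathbb{E}[v^*(s)]\bigr), \qquad \beta(s) := \mathbb{E}[v(s)] - \mathbb{E}[v^*(s)],
\]
$v^*$ is the feedback expression \eqref{eq:3.6} evaluated on the \emph{current} trajectory $X$, and $\mathcal{K}(t, \xi)$ coincides with the right-hand side of \eqref{eq:3.7}. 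Since $\Lambda^\gamma(P), \bar\Lambda^\gamma(P) \geq \delta I > 0$, the infimum in $v$ is attained uniquely at $\alpha \equiv \beta \equiv 0$, i.e.\ at $v = v^*$; this establishes \eqref{eq:3.6} and \eqref{eq:3.7} at once. Specializing to $\xi = 0$ and $b \equiv \sigma \equiv 0$ forces $\eta \equiv \zeta \equiv \bar\eta \equiv 0$ and $\mathcal{K} \equiv 0$, and a standard LQ uniform-convexity argument then upgrades the lower bound $\delta I$ on $\Lambda^\gamma(P), \bar\Lambda^\gamma(P)$ to $J_1^0(t, 0; 0, v) \geq \delta' \|v\|^2$ for some $\delta' > 0$, which is equivalent to $\|L\| < \gamma$.

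\textbf{Necessity and main obstacle.} Conversely, if $\|L\| < \gamma$ then $J_1^0(t, 0; 0, v) \geq (\gamma^2 - \|L\|^2) \|v\|^2$, so the homogeneous LQ cost is uniformly convex in $v$. Adapting the mean-field LQ framework of \cite{yong2013linear, sun2017mean} to this indefinite setting, uniform convexity yields a pair $(P, \Pi) \in C([t, T]; \mathbb{S}^n)^2$ solving the CDREs \eqref{eq:3.2} together with the required positivity bounds, and the linear BSDE \eqref{eq:3.40} and ODE \eqref{eq:3.50} with square-integrable inhomogeneities are then solvable by standard results. I expect the main obstacle to be the completion of squares itself: the coupling of $P$ with $\Pi$ through the tilded coefficients, the simultaneous bookkeeping of the affine forcings via $\varphi, \bar\varphi$, and the careful separation between adapted ($\eta$) and deterministic ($\bar\eta$) quantities produce many cross-terms that must cancel exactly. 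A second subtlety is upgrading the pointwise lower bound $\Lambda^\gamma(P) \geq \delta I$ to the strict operator norm bound $\|L\| < \gamma$ rather than the weaker $\|L\| \leq \gamma$; this is where the quantitative constant $\delta > 0$ and a linear-SDE perturbation estimate must be used.
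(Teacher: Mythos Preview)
Your proposal is correct and follows essentially the same route as the paper: the mean-field decomposition $X = \widehat X + \mathbb{E}[X]$, It\^o's formula applied to the test functional $\langle P\widehat X + 2\eta,\widehat X\rangle + \langle \Pi\mathbb{E}[X] + 2\bar\eta,\mathbb{E}[X]\rangle$, and the resulting completion of squares in $\Lambda^\gamma(P)$ and $\bar\Lambda^\gamma(P)$ are exactly what the paper does. The only difference is that the paper imports the bare equivalence ``$\|L\|<\gamma$ iff the CDREs \eqref{eq:3.2} are solvable with the positivity bounds'' directly from \cite{wang2023stochastic} rather than deriving it via the uniform-convexity argument you sketch, and then carries out the completion-of-squares computation to obtain \eqref{eq:3.6}--\eqref{eq:3.7}; your self-contained treatment of both directions is a legitimate expansion of what the paper leaves to the reference.
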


\begin{proof}
We recall from Definition~\ref{def:2} that the \(H_\infty\) norm of the system is characterized by the input-output operator \(L: v(\cdot) \mapsto z(\cdot)\) under zero control input. According to~\cite{wang2023stochastic}, the condition \(\|L\| < \gamma\) holds if and only if the CDREs \eqref{eq:3.2} admit a solution \((P(\cdot), \Pi(\cdot)) \in C([t, T]; \mathbb{S}^n)^2\) satisfying the uniform positivity conditions \(\Lambda^\gamma(P), \bar{\Lambda}^\gamma(P) \geq \delta I\), for some \(\delta > 0\).

Given such solutions, the BSDE~\eqref{eq:3.40} and the ODE~\eqref{eq:3.50} admit unique solutions \((\eta(\cdot), \zeta(\cdot)) \in L^2_{\mathbb{F}}(\Omega; C([t, T]; \mathbb{R}^n)) \times L^2_{\mathbb{F}}(t, T; \mathbb{R}^n)\) and \(\bar{\eta}(\cdot) \in C([t, T]; \mathbb{R}^n)\) with terminal conditions \(\eta(T) = 0\), \(\bar{\eta}(T) = 0\).

Define the centered variables
\(\bar{X}(s) := X(s) - \mathbb{E}[X(s)], \quad \bar{Z}(s) := v(s) - \mathbb{E}[v(s)].\)
Then, the dynamics of \(X(\cdot)\) decompose into two parts. The mean satisfies
\[
\begin{cases}
d\mathbb{E}[X(s)] = \left[\widetilde{A}_1(s)\mathbb{E}[X(s)] + \widetilde{C}_1(s)\mathbb{E}[v(s)] + \mathbb{E}[b(s)]\right] ds, \\
\mathbb{E}[X(t)] = \mathbb{E}[\xi],
\end{cases}
\]
while the fluctuation \(\bar{X}(s)\) evolves according to
\[
\begin{cases}
d\bar{X}(s) = \left[A_1(s)\bar{X}(s) + C_1(s)\bar{Z}(s) + b(s) - \mathbb{E}[b(s)]\right] ds \\
\qquad\qquad + \left[A_2(s)\bar{X}(s) + \widetilde{A}_2(s)\mathbb{E}[X(s)] + C_2(s)\bar{Z}(s) + \widetilde{C}_2(s)\mathbb{E}[v(s)] + \sigma(s)\right] dW(s), \\
\bar{X}(t) = \xi - \mathbb{E}[\xi].
\end{cases}
\]

We rewrite the cost functional \(J_1(t, \xi; 0, v(\cdot))\) as
	\[
\begin{split}
	J_1(t, \xi; 0, v(\cdot)) &= \mathbb{E}\int_t^T \left\{\langle\gamma^2\bar{Z}(s), \bar{Z}(s)\rangle - \langle Q(s)^\top Q(s)\bar{X}(s), \bar{X}(s)\rangle\right\}ds \\
	&\quad + \mathbb{E}\int_t^T \left\{\langle\gamma^2\mathbb{E}[v(s)], \mathbb{E}[v(s)]\rangle - \langle Q(s)^\top Q(s)\mathbb{E}[X(s)], \mathbb{E}[X(s)]\rangle\right\}ds.
\end{split}
\]

By applying It\^{o}'s formula to the processes \(\langle P(s)\bar{X}(s) + 2\eta(s), \bar{X}(s)\rangle\) and \(\langle \Pi(s)\mathbb{E}[X(s)] + 2\bar{\eta}(s), \mathbb{E}[X(s)] \rangle\), and invoking the CDREs~\eqref{eq:3.2}, one obtains an identity that leads to the completion-of-squares argument. Specifically, we derive
\[
\begin{aligned}
	&J_1(t, \xi; 0, v(\cdot)) -  \mathbb{E}\big\langle P(t)(\xi - \mathbb{E}[\xi]) + 2\eta(t), \xi - \mathbb{E}[\xi]\big\rangle - \big\langle \Pi\mathbb{E}[\xi] + 2\bar{\eta}(t), \mathbb{E}[\xi]\big\rangle \\
	 =& \mathbb{E} \int_t^T \Big\| \bar{Z}(s) - \Lambda^\gamma(P)^{-1} \Phi(P)^\top \bar{X}(s) - \Lambda^\gamma(P)^{-1} \varphi(s) \Big\|^2_{\Lambda^\gamma(P)} ds \\
	&+ \int_t^T \Big\| \mathbb{E}[v(s)] - \bar{\Lambda}^\gamma(P)^{-1} \bar{\Phi}(P,\Pi)^\top \mathbb{E}[X(s)] - \bar{\Lambda}^\gamma(P)^{-1} \bar{\varphi}(s) \Big\|^2_{\bar{\Lambda}^\gamma(P)} ds\\&+\mathbb{E}\int_t^T \bigg\{ \langle P\sigma, \sigma\rangle + 2\langle\eta, b - \mathbb{E}[b]\rangle + \langle\zeta, b\rangle + 2\langle\bar{\eta}, \mathbb{E}[b]\rangle  - \langle\Lambda^\gamma(P)^{-1}\varphi, \varphi\rangle - \langle\bar{\Lambda}^\gamma(P)^{-1}\bar{\varphi}, \bar{\varphi}\rangle \bigg\}ds.
\end{aligned}
\]

Thus, the cost is minimized by the choice of \(v^*(\cdot)\) in~\eqref{eq:3.6}, and  and  it easy to check that the minimal value equals the expression in~\eqref{eq:3.7}.
\end{proof}
	
	\section{Mean-Field Stochastic $H_2/H_\infty$ Control problem with affine terms}
In this section, we are dedicated to establishing a rigorous mathematical framework for the mixed $H_2/H_\infty$ control of mean-field stochastic systems under affine perturbations. The core of our study is centered on the characterization of the decoupling equations for both open-loop and closed-loop strategies. The definition of the closed-loop strategy has been clearly provided  in    Definition~\ref{def:1} and Problem~\ref{def:NSDG}. In the following, we will first give a strict definition of the open-loop strategy and conduct an in-depth exploration of the specific characterization of its decoupling equation.

\subsection{Open-Loop Solvability}
\label{subsec:openloop}
	
\begin{defn}\label{def:4.1}
Let $\xi \in L^2(\Omega, \mathbb{P}, \mathcal{F}_t; \mathbb{R}^n)$. A pair \((u^*(\cdot), v^*(\cdot))
 \in L^2_{\mathbb{F}}(t, T; \mathbb{R}^{n_u}) \times L^2_{\mathbb{F}}(t, T; \mathbb{R}^{n_v})\) is said to
 constitute an \emph{open-loop Nash equilibrium strategy}  with respect to the initial state \(\xi\)
  if the following conditions are satisfied for the system \eqref{eq:2.1}:\\
(i) For all \(v(\cdot) \in L^2_{\mathbb{F}}(t, T; \mathbb{R}^{n_v})\), the inequality
    \(
    J_1(t, \xi; u^*(\cdot), v^*(\cdot)) \leq J_1(t, \xi; u^*(\cdot), v(\cdot))
    \)
    holds.\\
(ii) For all \(u(\cdot) \in L^2_{\mathbb{F}}(t, T; \mathbb{R}^{n_u})\), the inequality
    \(
    J_2(t, \xi; u^*(\cdot), v^*(\cdot)) \leq J_2(t, \xi; u(\cdot), v^*(\cdot))
    \)
    holds.\\
In this case, the control \(u^*(\cdot)\) is called an open-loop \(H_2/H_\infty\) controller, and \(v^*(\cdot)\) represents the corresponding open-loop worst-case disturbance.
\end{defn}

\begin{rmk}
The notion of open-loop \(H_2/H_\infty\) control was originally introduced in \cite{chen2004stochastic} in the context of stochastic It\^{o } systems. From a game-theoretic perspective, the pair \((u^*(\cdot), v^*(\cdot))\) represents a Nash equilibrium strategy of a two-player non-zero-sum differential game.

It is important to highlight that the open-loop \(H_2/H_\infty\) control strategy is inherently tied to the specific initial condition \(\xi\). In contrast, the closed-loop strategy defined in Definition~\ref{def:1} applies uniformly for all admissible initial states. The concept of open-loop \(H_2/H_\infty\) control studied here extends significantly beyond earlier formulations, including those in \cite{chen2004stochastic} and \cite{wang2023stochastic}, by incorporating mean-field interactions and affine terms within the system dynamics.
\end{rmk}

By introducing a mean-field forward-backward stochastic differential equation (MF-FBSDE, for short), we further investigate the open-loop \(H_2/H_\infty\) control problem. Specifically, we consider the MF-FBSDE associated with the state process \(X(\cdot) = X(\cdot; t, \xi, u(\cdot), v(\cdot))\) governed by the system \eqref{eq:2.1}:
\begin{equation}\label{eq:4.1}
	\begin{cases}
		\begin{aligned}
			dX(s) &= \big\{A_1(s)X(s) + \bar{A}_1(s)\mathbb{E}[X(s)] + B_1(s)u(s) + \bar{B}_1(s)\mathbb{E}[u(s)] \\
			&\quad + C_1(s)v(s) + \bar{C}_1(s)\mathbb{E}[v(s)] + b(s)\big\}ds \\
			&\quad + \big\{A_2(s)X(s) + \bar{A}_2(s)\mathbb{E}[X(s)] + B_2(s)u(s) + \bar{B}_2(s)\mathbb{E}[u(s)] \\
			&\quad + C_2(s)v(s) + \bar{C}_2(s)\mathbb{E}[v(s)] + \sigma(s)\big\}dW(s), \\[1ex]
			dY_1(s) &= -\big\{A_1(s)^\top Y_1(s) + \bar{A}_1(s)^\top\mathbb{E}[Y_1(s)] + A_2(s)^\top Z_1(s) \\
			&\quad + \bar{A}_2(s)^\top\mathbb{E}[Z_1(s)] - Q(s)^\top Q(s)X(s)\big\}ds + Z_1(s)dW(s), \\[1ex]
			dY_2(s) &= -\big\{A_2(s)^\top Y_2(s) + \bar{A}_1(s)^\top\mathbb{E}[Y_2(s)] + A_2(s)^\top Z_2(s) \\
			&\quad + \bar{A}_2(s)^\top\mathbb{E}[Z_2(s)] + Q(s)^\top Q(s)X(s)\big\}ds + Z_2(s)dW(s), \\[1ex]
			X(t) &= \xi, \quad Y_1(T) = 0, \quad Y_2(T) = 0, \quad s \in [t, T].
		\end{aligned}
	\end{cases}
\end{equation}

The following theorem establishes necessary and sufficient conditions for the existence of an open-loop \(H_2/H_\infty\) control strategy. This result provides a key characterization of the solvability of the open-loop problem in terms of the disturbance attenuation requirement and the solvability of the associated MF-FBSDE \eqref{eq:4.1} with affine terms.
	\begin{thm}\label{thm:4.3}   Let Assumptions \ref{ass:3.1}-\ref{ass:3.3} holds.
Consider the system \eqref{eq:2.1} with an initial condition \(\xi \in L^2(\Omega, \mathbb{P}, \mathcal{F}_t; \mathbb{R}^n)\) and a prescribed disturbance attenuation level \(\gamma > 0\). Then a pair \((u^*(\cdot), v^*(\cdot)) \in L^2_{\mathbb{F}}(t, T; \mathbb{R}^{n_u}) \times L^2_{\mathbb{F}}(t, T; \mathbb{R}^{n_v})\) constitutes an \emph{open-loop Nash equilibrium strategy}  with respect to \(\xi\) if and only if the following conditions hold:\\
(i) \textbf{Disturbance Attenuation:} The induced operator norm of \(L\), as defined in Definition~\ref{def:2}, satisfies
    \[
    \|L\| \leq \gamma.
    \]
(ii) \textbf{MF-FBSDE Solvability and Stationarity:} The MF-FBSDE \eqref{eq:4.1} admits a solution
    \[
    (X^*(\cdot), Y_i^*(\cdot), Z_i^*(\cdot)) \in L^2_{\mathbb{F}}(\Omega; C(t, T; \mathbb{R}^n))^2 \times L^2_{\mathbb{F}}(t, T; \mathbb{R}^{n}),
    \]
    together with control processes \((u^*(\cdot), v^*(\cdot))\), such that the following stationarity conditions hold for almost every \(s \in [t, T]\):
    \begin{equation}\label{eq:4.2}
    \begin{cases}
        C_1(s)^\top Y^*_1(s) + C_2(s)^\top Z^*_1(s) + \bar{C}_1(s)^\top \mathbb{E}[Y^*_1(s)] + \bar{C}_2(s)^\top \mathbb{E}[Z^*_1(s)] + \gamma^2 v^*(s) = 0, \\[1ex]
        B_1(s)^\top Y^*_2(s) + B_2(s)^\top Z^*_2(s) + \bar{B}_1(s)^\top \mathbb{E}[Y^*_2(s)] + \bar{B}_2(s)^\top \mathbb{E}[Z^*_2(s)] + u^*(s) = 0.
    \end{cases}
    \end{equation}
\end{thm}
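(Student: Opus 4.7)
The plan is to establish the equivalence by proving necessity and sufficiency separately, using variational methods for the former and a completion-of-squares Itô calculation for the latter, with the Mean-Field Stochastic Bounded Real Lemma (Lemma~\ref{lem:3.3}) serving as the bridge between condition (i) and the convexity of $J_1$ in $v$.

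For necessity, I would first fix $u = u^*$ and observe that the map $v \mapsto J_1(t, \xi; u^*, v)$ must attain a finite infimum at $v^*$. Writing $v = v^* + \tilde v$ and using linearity of the state equation in the controls, the $v$-dependence of $J_1$ splits into a pure quadratic form in $\tilde v$ plus a linear cross-term plus a constant; the quadratic part coincides with $J_1^{0}(t,0;0,\tilde v) = \gamma^2 \|\tilde v\|^2 - \|L\tilde v\|^2$. For the infimum over the linear space $L^2_{\mathbb{F}}(t,T;\mathbb{R}^{n_v})$ to be finite (as required by optimality of $v^*$), this quadratic form must be nonnegative, which is precisely $\|L\| \leq \gamma$. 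For the stationarity conditions in (ii), I would run the standard perturbation argument twice: perturb $v = v^* + \epsilon \delta v$ with $u = u^*$ fixed, introduce the variational state $\delta X = \partial_\epsilon X^\epsilon|_{\epsilon = 0}$, and define $(Y_1^*, Z_1^*)$ as the solution to the adjoint BSDE from \eqref{eq:4.1}. Applying Itô's formula to $\langle Y_1^*, \delta X\rangle$ and setting $\frac{d}{d\epsilon} J_1|_{\epsilon=0} = 0$ for arbitrary $\delta v$ yields the first equation in \eqref{eq:4.2}; the analogous perturbation in $u$ with $v^*$ fixed and the adjoint $(Y_2^*, Z_2^*)$ produces the second.

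For sufficiency, assume (i) and (ii) hold with adjoint processes $(Y_i^*, Z_i^*)$ and controls $(u^*, v^*)$. Fix an arbitrary $v \in L^2_{\mathbb{F}}(t,T;\mathbb{R}^{n_v})$, let $X$ be the state under $(u^*, v)$, and set $\delta X = X - X^*$, $\delta v = v - v^*$; note that $\delta X$ solves a homogeneous mean-field SDE driven only by $\delta v$, with zero initial condition and no affine terms. Applying Itô's formula to $\langle Y_1^*, \delta X\rangle$ and using the BSDE for $Y_1^*$, the term $\langle Q^\top Q X^*, \delta X\rangle$ arises and can be absorbed into the difference $J_1(t,\xi;u^*,v) - J_1(t,\xi;u^*,v^*)$, while the terms from $C_1, C_2, \bar C_1, \bar C_2$ combine with $\gamma^2 \langle v^*, \delta v\rangle$ via the first stationarity equation to cancel all linear contributions in $\delta v$. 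What remains is exactly $J_1^{0}(t,0;0,\delta v) = \gamma^2 \|\delta v\|^2 - \|L \delta v\|^2$, which is nonnegative by condition (i). Hence $J_1(t,\xi;u^*,v) \geq J_1(t,\xi;u^*,v^*)$. An entirely analogous computation using $Y_2^*$, the second stationarity equation, and the identity $N_1^\top N_1 = I$ (which makes $J_2$ manifestly strictly convex in $u$) gives $J_2(t,\xi;u,v^*) \geq J_2(t,\xi;u^*,v^*)$.

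The main obstacle I anticipate is handling the affine structure cleanly in both directions. In necessity, the reduction to the homogeneous quadratic requires a careful additive shift $v = v^* + \tilde v$ and verification that the cross-term is linear in $\tilde v$ so that nonnegativity (rather than mere finiteness) of the quadratic form follows; the affine data $b, \sigma$ and the bias from $u^*$ enter only through the constant term and the linear term and must not contaminate the Hessian. In sufficiency, care is required in applying Itô's formula in the mean-field setting: one must split each process into its mean and centered parts, so that the contributions of $\bar A_i, \bar B_i, \bar C_i$ assemble correctly and produce the expectation terms appearing in \eqref{eq:4.2}, and one must invoke the terminal conditions $Y_i^*(T) = 0$ to kill the boundary contribution at $s = T$. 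Once these bookkeeping points are addressed, the proof reduces to a clean invocation of Lemma~\ref{lem:3.3} for the indefinite LQ game in $v$ and elementary convexity for the standard LQ problem in $u$.
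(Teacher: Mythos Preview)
Your proposal is correct and matches the paper's approach: in both directions the cost difference is decomposed into the homogeneous quadratic $J_1^0(t,0;0,\delta v)$ plus a cross-term that is eliminated by applying It\^{o}'s formula to $\langle Y_i^*, \delta X\rangle$ together with the stationarity conditions, with $\|L\|\le\gamma$ ensuring nonnegativity of the residual (and the $J_2$ case handled by manifest convexity). One small note: the equivalence between condition~(i) and nonnegativity of $J_1^0$ follows directly from Definition~\ref{def:2}, so Lemma~\ref{lem:3.3} (which concerns the strict inequality $\|L\|<\gamma$ via Riccati equations) is not actually needed here.
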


	\begin{proof}

\textbf{Sufficiency.} Assume that conditions (i) and (ii) in Theorem~\ref{thm:4.3} hold. We first prove that \(v^*(\cdot)\) is an open-loop worst-case disturbance.

For any \(v_1(\cdot) \in L^2_{\mathbb{F}}(t, T; \mathbb{R}^{n_v})\), let \(X_1(\cdot)\) denote the corresponding state of system \eqref{eq:2.1} with control \(u^*(\cdot)\) and disturbance \(v_1(\cdot)\). Define the variations:
\[
\Delta v(\cdot) = v_1(\cdot) - v^*(\cdot), \quad \Delta X_1(\cdot) = X_1(\cdot) - X^*(\cdot).
\]
Then \(\Delta X_1(\cdot)\) satisfies the linear MF-SDE:
\begin{equation*}
\begin{cases}
d(\Delta X_1(s)) = \{A_1(s)\Delta X_1(s) + \bar{A}_1(s)\mathbb{E}[\Delta X_1(s)] + C_1(s)\Delta v(s) + \bar{C}_1(s)\mathbb{E}[\Delta v(s)]\}ds \\[0.5em]
\qquad\qquad\qquad + \{A_2(s)\Delta X_1(s) + \bar{A}_2(s)\mathbb{E}[\Delta X_1(s)] + C_2(s)\Delta v(s) + \bar{C}_2(s)\mathbb{E}[\Delta v(s)]\}dW(s), \\[0.5em]
\Delta X_1(t) = 0,\qquad s\in[t,T].
\end{cases}
\end{equation*}

Applying It\^{o}'s formula to \(\langle \Delta X_1(s), Y_1^*(s) \rangle\) and using the adjoint equation, we obtain
\begin{equation}\label{eq:4.3}
\mathbb{E} \int_t^T \langle Q(s)^\top Q(s)\Delta X_1(s), X^*(s) \rangle ds = -\mathbb{E} \int_t^T \langle \Delta v(s), \Psi_1(s) \rangle ds,
\end{equation}
where \(\Psi_1(s) := C_1(s)^\top Y_1^*(s) + C_2(s)^\top Z_1^*(s) + \bar{C}_1(s)^\top \mathbb{E}[Y_1^*(s)] + \bar{C}_2(s)^\top \mathbb{E}[Z_1^*(s)]\).

Since \(\|L\| \leq \gamma\), the energy inequality yields
\[
J_1^0(t, 0; 0, \Delta v(\cdot)) = \mathbb{E} \int_t^T \left( \gamma^2 |\Delta v(s)|^2 - |Q(s)\Delta X_1(s)|^2 \right) ds \geq 0.
\]

Using this, we compute
\begin{equation}\label{eq:4.4}
	\begin{split}
&J_1(t, \xi; u^*(\cdot), v_1(\cdot)) - J_1(t, \xi; u^*(\cdot), v^*(\cdot))
\\=& J_1^0(t, 0; 0, \Delta v(\cdot)) + 2\mathbb{E} \int_t^T \left( \langle \gamma^2 \Delta v(s), v^*(s) \rangle - \langle Q^\top Q \Delta X_1(s), X^*(s) \rangle \right) ds.
	\end{split}
\end{equation}
Substituting \eqref{eq:4.3} into \eqref{eq:4.4} and using stationarity \eqref{eq:4.2}, we obtain
\[
J_1(t, \xi; u^*(\cdot), v_1(\cdot)) - J_1(t, \xi; u^*(\cdot), v^*(\cdot)) = J_1^0(t, 0; 0, \Delta v(\cdot)) \geq 0.
\]

We now show that \(u^*(\cdot)\) minimizes \(J_2(u(\cdot),v^*(\cdot))\). Define the variations \(\Delta u(\cdot) = u_1(\cdot) - u^*(\cdot)\), \(\Delta X_2(\cdot) = X_2(\cdot) - X^*(\cdot)\). Then \(\Delta X_2(\cdot)\) satisfies:
\begin{equation*}
\begin{cases}
d(\Delta X_2(s)) = \{A_1(s) \Delta X_2(s) + \bar{A}_1(s) \mathbb{E}[\Delta X_2(s)] + B_1(s) \Delta u(s) + \bar{B}_1(s) \mathbb{E}[\Delta u(s)]\} ds \\[0.5em]
\qquad\qquad\qquad + \{A_2(s) \Delta X_2(s) + \bar{A}_2(s) \mathbb{E}[\Delta X_2(s)] + B_2(s) \Delta u(s) + \bar{B}_2(s) \mathbb{E}[\Delta u(s)]\} dW(s), \\[0.5em]
\Delta X_2(t) = 0,\qquad s\in[t,T].
\end{cases}
\end{equation*}

Using It\^{o}'s formula and the adjoint process \(\langle \Delta X_2(s), Y_2^*(s) \rangle\), we get
\[
\mathbb{E} \int_t^T \langle Q^\top Q \Delta X_2(s), X^*(s) \rangle ds = \mathbb{E} \int_t^T \langle \Delta u(s), \Psi_2(s) \rangle ds,
\]
where \(\Psi_2 := B_1^\top Y_2^* + B_2^\top Z_2^* + \bar{B}_1^\top \mathbb{E}[Y_2^*] + \bar{B}_2^\top \mathbb{E}[Z_2^*]\).

Then,
\begin{align*}
J_2(t, \xi; u_1(\cdot), v^*(\cdot)) - J_2(t, \xi; u^*(\cdot), v^*(\cdot))
= \mathbb{E} \int_t^T  \langle\Delta u(s),\Delta u(s)\rangle + \langle Q \Delta X_2(s),Q \Delta X_2(s)\rangle ds \geq 0.
\end{align*}

This confirms that \(v^*(\cdot)\) is the open-loop worst-case disturbance and \(u^*(\cdot)\) is the corresponding optimal controller. Hence, the pair \((u^*(\cdot), v^*(\cdot))\) forms an open-loop \(H_2/H_\infty\) control strategy for system \eqref{eq:2.1} with initial state \(\xi\). \qed

\textbf{Necessity.} Suppose \((u^*(\cdot), v^*(\cdot))\) is an open-loop \(H_2/H_\infty\) equilibrium strategy. Then the associated MF-FBSDE \eqref{eq:4.1} admits a unique adapted solution  \[
(X^*(\cdot), Y_i^*(\cdot), Z_i^*(\cdot)) \in L^2_{\mathbb{F}}(\Omega; C(t, T; \mathbb{R}^n))^2 \times L^2_{\mathbb{F}}(t, T; \mathbb{R}^{n}).
\]

We apply the variational method. Consider a perturbation of the disturbance:
\[
\Delta v(\cdot) = \varepsilon v(\cdot), \quad \text{with } v(\cdot) \in L^2_{\mathbb{F}}(t, T; \mathbb{R}^{n_v}) \text{ arbitrary}, \, \varepsilon \in \mathbb{R}.
\]
The corresponding variation of the cost functional \(J_1\) is given by
\begin{align*}
&J_1(t, \xi; u^*(\cdot), v^*(\cdot) + \varepsilon v(\cdot)) - J_1(t, \xi; u^*(\cdot), v^*(\cdot))
\\=& \varepsilon^2 J_1^0(t, 0; 0, v(\cdot))
+ 2\varepsilon \mathbb{E} \int_t^T \left\langle v(s), \Psi_1(s) + \gamma^2 v^*(s) \right\rangle ds,
\end{align*}
where the process \(\Psi_1(\cdot)\) is defined as
\[
\Psi_1(s) := C_1(s)^\top Y_1^*(s) + C_2(s)^\top Z_1^*(s) + \bar{C}_1(s)^\top \mathbb{E}[Y_1^*(s)] + \bar{C}_2(s)^\top \mathbb{E}[Z_1^*(s)].
\]

Since \(v^*(\cdot)\) minimizes \(J_1\), the above variation must be non-negative for all \(\varepsilon \in \mathbb{R}\), implying the first-order stationarity condition:
\[
\Psi_1(s) + \gamma^2 v^*(s) = 0, \quad \text{a.e. } s \in [t, T], \ \text{a.s.}
\]

This is the first stationarity condition in \eqref{eq:4.2}. Substituting the stationarity condition into the expansion of \(J_1\), we obtain
\[
J_1(t, \xi; u^*(\cdot), v^*(\cdot) + \Delta v(\cdot)) - J_1(t, \xi; u^*(\cdot), v^*(\cdot)) = \varepsilon^2 J_1^0(t, 0; 0, \Delta v(\cdot)).
\]
Since \(v^*(\cdot)\) is optimal, the left-hand side is non-negative for all \(\varepsilon \in \mathbb{R}\) and \(v(\cdot) \neq 0\). This implies
\[
J_1^0(t, 0; 0, \Delta v(\cdot))\geq0.
\]	The inequality \(J_1^0(t, 0; 0, \Delta v(\cdot))\) ensures that the \(H_\infty\) norm of the system satisfies \(\|L\| \leq \gamma\).

Analogously, consider the perturbation in the control:
\[
\Delta u(\cdot) = \varepsilon u(\cdot), \quad u(\cdot) \in L^2_{\mathbb{F}}(t, T; \mathbb{R}^{n_u}) \text{ arbitrary},  \varepsilon \in \mathbb{R}.
\]
Then the variation of \(J_2\) satisfies:
\begin{align*}
&J_2(t, \xi; u^*(\cdot) + \varepsilon u(\cdot), v^*(\cdot)) - J_2(t, \xi; u^*(\cdot), v^*(\cdot))
\\=  & \varepsilon^2 \mathbb{E} \int_t^T \left( \|Q(s) u(s)\|^2 + \|u(s)\|^2 \right) ds + 2\varepsilon \mathbb{E} \int_t^T \left\langle u(s), \Psi_2(s) + u^*(s) \right\rangle ds,
\end{align*}
where
\[
\Psi_2(s) := B_1(s)^\top Y_2^*(s) + B_2(s)^\top Z_2^*(s) + \bar{B}_1(s)^\top \mathbb{E}[Y_2^*(s)] + \bar{B}_2(s)^\top \mathbb{E}[Z_2^*(s)].
\]

Optimality of \(u^*(\cdot)\) implies
\[
\Psi_2(s) + u^*(s) = 0, \quad \text{a.e. } s \in [t, T], \ \text{a.s.}
\]
Therefore, both optimality conditions lead to the stationarity conditions \eqref{eq:4.2}, which completes the necessity part of the proof. This completes the proof.
\end{proof}

	In this following, we focus on deriving a linear state-feedback representation for the  \emph{open-loop} \(H_2/H_\infty\) control strategy of  the \(H_2/H_\infty\) control problem.

	To simplify notation, let \(m = n_u + n_v\) and define the following augmented matrices (for \(i = 1, 2\)):
	\begin{equation}\label{eq:4.10}
		\left\{
		\begin{aligned}
			&B(\cdot) = \begin{pmatrix} B_1 & C_1 \end{pmatrix}(\cdot), \quad
			\bar{B}(\cdot) = \begin{pmatrix} \bar{B}_1 & \bar{C}_1 \end{pmatrix}(\cdot), \\
			&D(\cdot) = \begin{pmatrix} B_2 & C_2 \end{pmatrix}(\cdot), \quad
			\bar{D}(\cdot) = \begin{pmatrix} \bar{B}_2 & \bar{C}_2 \end{pmatrix}(\cdot), \\
			&\mathbf{A_i}(\cdot) = \begin{pmatrix} A_i & 0 \\ 0 & A_i \end{pmatrix}(\cdot), \quad
			\mathbf{\bar{A}_i}(\cdot) = \begin{pmatrix} \bar{A}_i & 0 \\ 0 & \bar{A}_i \end{pmatrix}(\cdot), \\
			&\mathbf{B}(\cdot) = \begin{pmatrix} B & 0 \\ 0 & B \end{pmatrix}(\cdot), \quad
			\mathbf{\bar{B}}(\cdot) = \begin{pmatrix} \bar{B} & 0 \\ 0 & \bar{B} \end{pmatrix}(\cdot), \\
			&\mathbf{D}(\cdot) = \begin{pmatrix} D & 0 \\ 0 & D \end{pmatrix}(\cdot), \quad
			\mathbf{\bar{D}}(\cdot) = \begin{pmatrix} \bar{D} & 0 \\ 0 & \bar{D} \end{pmatrix}(\cdot), \\
			&\mathbf{Q}(\cdot) = \begin{pmatrix} -Q^\top Q & 0 \\ 0 & Q^\top Q \end{pmatrix}(\cdot), \quad
			\mathbf{R} = \begin{pmatrix} I_{n_u} & 0 \\ 0 & \gamma^2 I_{n_v} \end{pmatrix}.
		\end{aligned}
		\right.
	\end{equation}
	
	The MF-FBSDE \eqref{eq:4.1} can then be compactly written as
	\begin{equation}\label{eq:4.13*}\left\{
		\begin{aligned}
			&dX^*(s) = \Big\{ A_1(s)X^*(s) + \bar{A}_1(s)\mathbb{E}[X^*(s)] + B(s)w^*(s) + \bar{B}(s)\mathbb{E}[w^*(s)] +b(s)\Big\}ds \\
			&\qquad\qquad + \Big\{ A_2(s)X^*(s) + \bar{A}_2(s)\mathbb{E}[X^*(s)] + D(s)w^*(s) + \bar{D}(s)\mathbb{E}[w^*(s)] +\sigma(s)\Big\}dW(s), \\
			&-d\mathbf{Y}^*(s) = \Big\{ \mathbf{A_1}(s)^\top\mathbf{Y}^*(s) + \mathbf{\bar{A}_1}(s)^\top\mathbb{E}[\mathbf{Y}^*(s)]\\
			&\qquad\qquad\qquad + \mathbf{A_2}(s)^\top\mathbf{Z}^*(s) + \mathbf{\bar{A}_2}^\top\mathbb{E}[\mathbf{Z}^*(s)] + \mathbf{Q}\mathbf{I}_n(s)X^*(s) \Big\}ds - \mathbf{Z}^*(s)dW(s), \\
			&X^*(t) = \xi,~\mathbf{Y}^*(T)=0,
		\end{aligned}\right.
	\end{equation}	
	with the stationarity conditions \eqref{eq:4.2} reformulated as:
	\begin{equation}\label{eq:4.12*}
		\mathbf{J}^\top \Big\{ \mathbf{B}(s)^\top\mathbf{Y}^*(s) + \mathbf{\widetilde{B}}(s)^\top\mathbb{E}[\mathbf{Y}^*(s)] + \mathbf{D}(s)^\top\mathbf{Z}^*(s) + \mathbf{\widetilde{D}}(s)^\top\mathbb{E}[\mathbf{Z}^*(s)] + \mathbf{R}\mathbf{I}_m(s)w^*(s) \Big\} = 0,
	\end{equation}
	where the augmented vectors and matrices are defined by:
	\begin{equation}
		\left\{
		\begin{aligned}
			&\mathbf{Y}^*(\cdot) = \begin{pmatrix} Y_2^*(\cdot) \\ Y_1^*(\cdot) \end{pmatrix}, \quad
			\mathbf{Z}^*(\cdot) = \begin{pmatrix} Z_2^*(\cdot) \\ Z_1^*(\cdot) \end{pmatrix}, \quad
			\mathbf{I}_n = \begin{pmatrix} I_{n\times n} \\ I_{n\times n} \end{pmatrix}, \\
			&\mathbf{I}_m = \begin{pmatrix} I_{m\times m} \\ I_{m\times m} \end{pmatrix}, \quad
			\mathbf{J} = \begin{bmatrix}
				I_{m_1\times m_1} & 0 \\
				0 & 0 \\
				0 & 0 \\
				0 & I_{m_2\times m_2}
			\end{bmatrix}, \widetilde{\mathbf{B}}(\cdot)\triangleq \mathbf{B}(\cdot) + \bar{\mathbf{B}}(\cdot),~\widetilde{\mathbf{D}}(\cdot) \triangleq \mathbf{D}(\cdot) + \bar{\mathbf{D}}(\cdot).
		\end{aligned}
		\right.
	\end{equation}	
	Currently, taking inspiration from \cite{sun2016open}, we are able to derive a closed-loop representation for the \emph{open-loop} \(H_2/H_\infty\) control strategy with respect to the initial state \(\xi\) if the following conditions are satisfied for the system \eqref{eq:2.1}. Specifically, an open- loop control strategy
\[	w^*(\cdot) = K^*(\cdot)\left(X^*(\cdot) - \mathbb{E}[X^*(\cdot)]\right) + \widetilde{K}^*(\cdot)\mathbb{E}[X^*(\cdot)] + \chi^*(\cdot),\] for certain \((K^*(\cdot),\widetilde{K}^*(\cdot),\chi^*(\cdot))\in  C(t, T ; \mathbb{R}^{m\times n})\times C(t, T ; \mathbb{R}^{m\times n})\times L_{\mathbb{F}}^{2}(t, T ; \mathbb{R}^{m})\), where \(w^*(\cdot)=(u^*(\cdot),v^*(\cdot)))^\top\). We present the result here.
		
	\begin{thm}
		Let Assumptions \ref{ass:3.1}-\ref{ass:3.3} holds. Let $\xi \in L^2(\Omega,\mathbb{P},\mathcal{F}_t;\mathbb{R}^n)$ and $\gamma > 0$. Suppose the following conditions are satisfied.
		
		(i)The system of coupled  differential Riccati equations (CDREs): \begin{equation}\label{eq:4.16*} \left\{\begin{aligned} &\dot{\mathbf{P}} + \mathbf{P}A_1 + \mathbf{A}_1^\top\mathbf{P} + \mathbf{A}_2^\top\mathbf{P}A_2 + \mathbf{QI}_n \\ &\quad - (\mathbf{P}B + \mathbf{A}_2^\top\mathbf{P}D)\mathbf{\Sigma}^{-1}\mathbf{J}^\top(\mathbf{B}^\top\mathbf{P} + \mathbf{D}^\top\mathbf{P}A_2) = 0, \\ &\dot{\mathbf{\Pi}} + \mathbf{\Pi}\widetilde{A}_1 + \widetilde{\mathbf{A}}_1^\top\mathbf{\Pi} + \widetilde{\mathbf{A}}_2^\top\mathbf{P}\widetilde{A}_2 + \mathbf{QI}_n \\ &\quad - (\mathbf{\Pi}\widetilde{B} + \widetilde{\mathbf{A}}_2^\top\mathbf{P}\widetilde{D})\bar{\mathbf{\Sigma}}^{-1}\mathbf{J}^\top(\widetilde{\mathbf{B}}^\top\mathbf{\Pi} + \widetilde{\mathbf{D}}^\top\mathbf{P}\widetilde{A}_2) = 0, \end{aligned}\right. \end{equation}
		admits a solution pair $$(\mathbf{P}(\cdot), \mathbf{\Pi}(\cdot)) \triangleq \begin{pmatrix} P_2 & \Pi_2 \\ P_1 & \Pi_1 \end{pmatrix}(\cdot)
		\in C([t, T]; \mathbb{S}^{2n \times 2n}) \times C([t, T]; \mathbb{S}^{2n \times 2n}),$$ where $$\widetilde{\mathbf{A}}_i(\cdot) \triangleq \mathbf{A}_i(\cdot) + \bar{\mathbf{A}}_i(\cdot
		),(i = 1,2),~\widetilde{\mathbf{B}}(\cdot)\triangleq \mathbf{B}(\cdot) + \bar{\mathbf{B}}(\cdot),~\widetilde{\mathbf{D}}(\cdot) \triangleq \mathbf{D}(\cdot) + \bar{\mathbf{D}}(\cdot),$$  and $$\mathbf{\Sigma} \triangleq \mathbf{J}^\top(\mathbf{R}\mathbf{I}_m + \mathbf{D}^\top\mathbf{P}D), \bar{\mathbf{\Sigma}} \triangleq \mathbf{J}^\top(\mathbf{R}\mathbf{I}_m + \widetilde{\mathbf{D}}^\top\mathbf{P}\widetilde{D}),$$ are invertible.
		
		(ii)   The backward stochastic differential equation (BSDE): \begin{equation}\label{eq:4.17*} \left\{\begin{split} d\eta(s) = -\Big[ \mathbf{A}_1(s)^\top\eta(s) &- (\mathbf{P}(s)B(s) + \mathbf{A}_2(s)^\top\mathbf{P}(s)D(s))\mathbf{\Sigma}(s)^{-1}\mathbf{J}^\top \\ &\cdot \{\mathbf{B}(s)^\top\eta(s) + \mathbf{D}(s)^\top[\zeta(s) + \mathbf{P}(s)\sigma(s)]\} \\ &+ \mathbf{A}_2(s)^\top[\zeta(s) + \mathbf{P}(s)\sigma(s)] + \mathbf{P}(s)b(s) \Big]ds + \zeta(s)dW(s), \\ \eta(T) = 0,~~~~~~~~~~~~~~~& \end{split}\right. \end{equation} has an adapted solution $(\eta(\cdot), \zeta(\cdot)) \triangleq \begin{pmatrix} \eta_2 ~~ \zeta_2 \\ \eta_1 ~~ \zeta_1 \end{pmatrix}\!  (\cdot)$ \(\in\) $L^2_{\mathbb{F}}(\Omega; C(t, T; \mathbb{R}^{2n})) \times L^2_{\mathbb{F}}(t, T; \mathbb{R}^{2n})$.
		
		(iii)  The ordinary differential equation (ODE): \begin{equation}\label{eq:4.18*} \left\{\begin{split} \dot{\bar{\eta}}(s) + \widetilde{\mathbf{A}}_1(s)^\top\bar{\eta}(s) &+ \widetilde{\mathbf{A}}_2(s)^\top\mathbb{E}[\zeta(s) + \mathbf{P}(s)\sigma(s)] + \mathbf{\Pi}(s)\mathbb{E}[b(s)] \\ &- (\mathbf{\Pi}(s)\widetilde{B}(s) + \widetilde{\mathbf{A}}_2(s)^\top\mathbf{P}(s)\widetilde{D}(s))\bar{\mathbf{\Sigma}}(s)^{-1} \\ &\cdot \{\widetilde{\mathbf{B}}(s)^\top\bar{\eta}(s) + \widetilde{\mathbf{D}}^\top\mathbb{E}[\zeta + \mathbf{P}(s)\sigma(s)]\} = 0, \\ \bar{\eta}(T) = 0,~~~~~~~~~~& \end{split}\right. \end{equation} admits a solution $\bar{\eta}(\cdot) \triangleq \begin{pmatrix} \bar{\eta}_2 \\ \bar{\eta}_1 \end{pmatrix}(\cdot)$ \(\in\) $C(t, T; \mathbb{R}^{2n})$.
		
		(iv)  The norm of the interference operator \(L\) satisfies \(\|L\| \leq \gamma\), where the norm \(\|L\|\) is consistent with Definition \ref{def:2}.
		
			Then the open-loop  \(H_2/H_\infty\) problem
		admits an \emph{open-loop Nash equilibrium strategy} \(w^*(\cdot)=(u^*(\cdot),v^*(\cdot)))^\top \in L_{\mathbb{F}}^2(t, T; \mathbb{R}^m) \) with the closed-loop representation:
		\[
		w^*(\cdot) = K^*(\cdot)\left(X^*(\cdot) - \mathbb{E}[X^*(\cdot)]\right) + \widetilde{K}^*(\cdot)\mathbb{E}[X^*(\cdot)] + \chi^*(\cdot),
		\]
		where
		\begin{equation}\label{eq:4.19}
			\begin{split}
				&K^*(\cdot) = -\mathbf{\Sigma}^{-1}\mathbf{J}^\top\left(\mathbf{B}^\top\mathbf{P} + \mathbf{D}^\top\mathbf{P}A_2\right) \in L_{\mathbb{F}}^{2}(t, T ; \mathbb{R}^{m\times n}),\\
				&\widetilde{K}^*(\cdot) = -\bar{\mathbf{\Sigma}}^{-1}\mathbf{J}^\top\left(\widetilde{\mathbf{B}}^\top\mathbf{\Pi} + \widetilde{\mathbf{D}}^\top\mathbf{P}\widetilde{A}_2\right) \in L_{\mathbb{F}}^{2}(t, T ; \mathbb{R}^{m\times n}),\end{split}
		\end{equation}
		and  \begin{equation}\label{eq:4.20*}
			\begin{split}
				\chi^*(\cdot) = -\mathbf{\Sigma}^{-1}\mathbf{J}^\top \Big[\mathbf{B}^\top(\eta - \mathbb{E}[\eta]) + \mathbf{D}^\top(\zeta - \mathbb{E}[\zeta]) + \mathbf{D}^\top\mathbf{P}(\sigma - \mathbb{E}[\sigma])\Big] - \bar{\mathbf{\Sigma}}^{-1}\mathbf{J}^\top \Big[\widetilde{\mathbf{B}}^\top\bar{\eta} + \widetilde{\mathbf{D}}^\top\mathbb{E}[\zeta] + \widetilde{\mathbf{D}}^\top\mathbf{P}\mathbb{E}[\sigma]\Big].
			\end{split}
		\end{equation}
	
	\end{thm}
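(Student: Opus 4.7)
The plan is to verify the hypotheses of the characterization Theorem \ref{thm:4.3}; since hypothesis (iv) already supplies the disturbance-attenuation bound $\|L\|\le\gamma$, only the MF-FBSDE solvability together with the stationarity condition \eqref{eq:4.2} (equivalently \eqref{eq:4.12*} in the stacked notation) has to be established. The key idea is to decouple the adjoint pair $(\mathbf{Y}^*,\mathbf{Z}^*)$ from $X^*$ through the CDRE/BSDE/ODE triple, so that the stationarity condition reduces to a linear algebraic identity that is solved by the candidate $(K^*,\widetilde{K}^*,\chi^*)$.

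First I would plug the candidate feedback $w^*=K^*(X^*-\mathbb{E}[X^*])+\widetilde{K}^*\mathbb{E}[X^*]+\chi^*$ into the state equation in \eqref{eq:4.13*}. Because $K^*,\widetilde{K}^*$ are deterministic and continuous while $\chi^*\in L^2_{\mathbb{F}}(t,T;\mathbb{R}^m)$, the resulting mean-field SDE is linear with square-integrable inhomogeneity and, by standard well-posedness, admits a unique solution $X^*\in L^2_{\mathbb{F}}(\Omega;C(t,T;\mathbb{R}^n))$. Taking expectation yields the ODE for $\mathbb{E}[X^*]$ and subtracting gives the centred dynamics for $X^*-\mathbb{E}[X^*]$; both are needed in the It\^o calculation below.

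Motivated by the classical LQ decoupling ansatz, I set
\[
\mathbf{Y}^*(s)=\mathbf{P}(s)\bigl(X^*(s)-\mathbb{E}[X^*(s)]\bigr)+\mathbf{\Pi}(s)\mathbb{E}[X^*(s)]+\bigl(\eta(s)-\mathbb{E}[\eta(s)]\bigr)+\bar{\eta}(s).
\]
Applying It\^o's formula to $\mathbf{Y}^*$ and reading off the martingale part identifies $\mathbf{Z}^*$ as the $\mathbf{P}$-weighted diffusion of $X^*-\mathbb{E}[X^*]$ plus $\zeta-\mathbb{E}[\zeta]$. I would then substitute this ansatz into the stationarity expression \eqref{eq:4.12*} and split it into centred and mean parts. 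Thanks to the invertibility of $\mathbf{\Sigma}$ and $\bar{\mathbf{\Sigma}}$ granted by hypothesis (i), this reduces to two linear algebraic identities whose unique solutions are exactly $K^*,\widetilde{K}^*$ of \eqref{eq:4.19} and $\chi^*$ of \eqref{eq:4.20*}. Finally, matching the drift of $d\mathbf{Y}^*$ against the right-hand side of the backward equation in \eqref{eq:4.13*} becomes an identity that is satisfied term-by-term: the quadratic-in-$\mathbf{P}$ piece is absorbed by the first CDRE of \eqref{eq:4.16*}, the cross terms involving $\widetilde{\mathbf{A}}_i,\widetilde{\mathbf{B}},\widetilde{\mathbf{D}}$ and $\mathbf{\Pi}$ are absorbed by the second CDRE, while the affine $b,\sigma,\eta,\zeta,\bar{\eta}$ contributions are absorbed by the BSDE \eqref{eq:4.17*} and the ODE \eqref{eq:4.18*}. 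Invoking Theorem \ref{thm:4.3} then yields the claim.

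The main obstacle will be the bookkeeping in the It\^o expansion of $d\mathbf{Y}^*$: one must carefully separate mean-zero and mean components of $X^*,\eta,\zeta$, invoke the correct CDRE for each part, and verify that the coefficients of $X^*-\mathbb{E}[X^*]$, $\mathbb{E}[X^*]$, and the various affine residuals all vanish. The splitting into centred and mean subdynamics is precisely what allows the two decoupled CDREs in \eqref{eq:4.16*} to play their individual roles, and the invertibility of $\mathbf{\Sigma},\bar{\mathbf{\Sigma}}$ is indispensable both for the well-posedness of the feedback and for the unique identification of $(K^*,\widetilde{K}^*,\chi^*)$.
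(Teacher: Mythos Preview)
Your proposal is correct and follows essentially the same route as the paper: define $w^*$ via the candidate feedback, solve the resulting closed-loop MF-SDE for $X^*$, posit the decoupling ansatz $\mathbf{Y}^*=\mathbf{P}(X^*-\mathbb{E}[X^*])+\mathbf{\Pi}\mathbb{E}[X^*]+(\eta-\mathbb{E}[\eta])+\bar{\eta}$, apply It\^o's formula, and use the CDREs/BSDE/ODE to verify that $(\mathbf{Y}^*,\mathbf{Z}^*)$ satisfies the backward equation in \eqref{eq:4.13*} and the stationarity condition \eqref{eq:4.12*}, then invoke Theorem~\ref{thm:4.3}. One small slip: the martingale part of $d(\eta-\mathbb{E}[\eta])$ is $\zeta\,dW$ (not $(\zeta-\mathbb{E}[\zeta])dW$), so $\mathbf{Z}^*$ carries the full $\zeta$ plus the $\mathbf{P}$-weighted diffusion of $X^*$, which includes both the centred and the mean-field components of the $dW$-coefficient; this is exactly what the paper records in its explicit formula for $\mathbf{Z}^*$.
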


	\begin{proof}
		If the Riccati equations \eqref{eq:4.16*} have a set of solutions \((\mathbf{P}(\cdot),\mathbf{\Pi}(\cdot))\) and the BSDE \eqref{eq:4.17*} and ODE \eqref{eq:4.18*} have solutions \((\eta(\cdot),\zeta(\cdot))\) and \(\bar{\eta}(\cdot)\), we can  define a pair  \(w^*(\cdot)=(u^*(\cdot),v^*(\cdot)))^\top\)
		as follows
		\begin{equation}
			\begin{split}
				w^*(\cdot) =&K^*(\cdot)\left(X^*(\cdot) - \mathbb{E}[X^*(\cdot)]\right) + \widetilde{K}^*(\cdot)\mathbb{E}[X^*(\cdot)] + \chi^*(\cdot)
				\\=&-\mathbf{\Sigma}^{-1}\mathbf{J}^\top(\mathbf{B}^\top\mathbf{P}+\mathbf{D}^\top\mathbf{P}A_2)(X^*-\mathbb{E}[X^*])-\mathbf{\bar{\Sigma}}^{-1}\mathbf{J}^\top(\mathbf{\widetilde{B}}^\top\mathbf{\Pi}+\mathbf{\widetilde{D}}^\top\mathbf{P}\widetilde{A}_2)\mathbb{E}[X^*]\\&
				-\mathbf{\Sigma}^{-1}\mathbf{J}^\top\{\mathbf{B}^{\top}(\eta-\mathbb{E}[\eta])+\mathbf{D}^\top(\zeta-\mathbb{E}[\zeta])+\mathbf{D}^\top\mathbf{P}(\sigma-\mathbb{E}[\sigma])\}\\&-\mathbf{\bar{\Sigma}}^{-1}\mathbf{J}^\top\{\mathbf{\widetilde{B}}^\top\bar{\eta}+\mathbf{\widetilde{D}}^\top\mathbb{E}[\zeta]+\mathbf{\widetilde{D}}^\top\mathbf{P}\mathbb{E}[\sigma]\}.
			\end{split}
		\end{equation}
		where \(X^*(\cdot)\) is the solution of the following MF-SDE:
		\begin{equation}\label{eq:MF-SDE}
	\left\{\begin{split}
				&dX^*(s) = \Big\{ A_1(s)X^*(s) + \bar{A}_1(s)\mathbb{E}[X^*(s)] + B(s)w^*(s) + \bar{B}(s)\mathbb{E}[w^*(s)]+b(s) \Big\}ds \nonumber\\
				&\qquad\quad + \Big\{ A_2(s)X^*(s) + \bar{A}_2(s)\mathbb{E}[X^*(s)] + D(s)w^*(s) + \bar{D}(s)\mathbb{E}[w^*(s)]+\sigma(s) \Big\}dW(s),\\
				&X^*(t) = \xi.
		\end{split}\right. \end{equation}
		We set \begin{equation}	\label{eq:4.18}
			\left\{	\begin{aligned}
				\mathbf{Y}^*(\cdot)=&\mathbf{P}(\cdot)(X^*(\cdot)-\mathbb{E}[X^*(\cdot)])+\mathbf{\Pi}(\cdot)\mathbb{E}[X^*(\cdot)]+\eta(\cdot)-\mathbb{E}[\eta(\cdot)]+\bar{\eta}(\cdot),\\
				\mathbf{Z}^*(\cdot)=&\mathbf{P}\bigg[\{A_2-D\mathbf{\Sigma}^{-1}\mathbf{J}(\mathbf{B}^\top\mathbf{P}+\mathbf{D}^\top\mathbf{P}A_2)\}(X^*-\mathbb{E}[X^*])\\&+\{\widetilde{A}_2-\widetilde{D}\mathbf{\bar{\Sigma}}^{-1}\mathbf{J}(\mathbf{\widetilde{B}}^\top\mathbf{\Pi}+\mathbf{\widetilde{D}}^\top\mathbf{P}\widetilde{A}_2)\}\mathbb{E}[X^*]\\&-D\mathbf{\Sigma}^{-1}\mathbf{J}^\top\{\mathbf{B}^{\top}(\eta-\mathbb{E}[\eta])+\mathbf{D}^\top(\zeta-\mathbb{E}[\zeta])+\mathbf{D}^\top\mathbf{P}(\sigma-\mathbb{E}[\sigma])\}\\&-\widetilde{D}\mathbf{\bar{\Sigma}}^{-1}\mathbf{J}^\top\{\mathbf{\widetilde{B}}^\top\bar{\eta}+\mathbf{\widetilde{D}}^\top\mathbb{E}[\zeta]+\mathbf{\widetilde{D}}^\top\mathbf{P}\mathbb{E}[\sigma]\}\bigg]+\zeta,
			\end{aligned}\right.
		\end{equation}
		For the sake of convenient expression, let \[	\begin{split}
			f(s)=&-\biggl\{\mathbf{A_1}(s)^\top\eta(s)-(\mathbf{P}(s)B(s)+\mathbf{A_2}(s)^\top\mathbf{P}(s)D(s))\mathbf{\Sigma}(s)^{-1}\mathbf{J}^\top\{\mathbf{B}(s)^\top\eta(s)\\&+\mathbf{D}(s)^\top[\zeta(s)+\mathbf{P}(s)\sigma(s)]\}+\mathbf{A_2}(s)^\top[\zeta(s)+\mathbf{P}(s)\sigma(s)]+\mathbf{P}(s)b(s)\biggr\},
		\end{split}\]
		then BSDE \eqref{eq:4.17*} can be rewritten as
		\begin{equation*}\left\{\begin{split}
				&d\eta(s)=-f(s)ds+\zeta(s)dW(s),\\&\eta(T)=0.\end{split}\right.
		\end{equation*}
		
		Applying It\^{o}'s formula to \(\mathbf{Y}^*(\cdot)\),
		we get \begin{equation}\label{eq:4.21}
			\begin{split}
				d\mathbf{Y}^*(\cdot)	=&	\{\dot{\mathbf{P}}(X^*-\mathbb{E}[X^*])+\mathbf{P}\mathbf{A_1}(X^*-\mathbb{E}[X^*])+\mathbf{P}B(w^*-\mathbb{E}[w^*])+\mathbf{P}(b-\mathbb{E}[b])\\&+\dot{\mathbf{\Pi}}\mathbb{E}[X^*]+\mathbf{\Pi}\widetilde{A}_1\mathbb{E}[X^*]+\mathbf{\Pi}\widetilde{B}\mathbb{E}[w]+\mathbf{\Pi}\mathbb{E}[b]+\dot{\bar{\eta}}-(f-\mathbb{E}[f])\}dt+\{\mathbf{P}A_2(X^*-\mathbb{E}[X^*])\\&+\mathbf{P}\widetilde{A}_2\mathbb{E}[X^*]~+\mathbf{P}D(w^*-\mathbb{E}[w^*])+\mathbf{P}\widetilde{D}\mathbb{E}[w^*]+\mathbf{P}\sigma+\zeta\}dW(s).
			\end{split}
		\end{equation}
		By conducting additional computations, one can get
		\begin{equation}\label{eq:4.25*}
			d\mathbf{Y}^*(\cdot)=-\Big\{ \mathbf{A_1}(s)^\top\mathbf{Y}^*(s) + \mathbf{\bar{A}_1}(s)^\top\mathbb{E}[\mathbf{Y}^*(s)]+ \mathbf{A_2}(s)^\top\mathbf{Z}^*(s) + \mathbf{\bar{A}_2}(s)^\top\mathbb{E}[\mathbf{Z}^*(s)] + \mathbf{Q}\mathbf{I}_n(s)X^*(s) \Big\}ds + \mathbf{Z}^*(s)dW(s).
		\end{equation}
		It's easy to see that \eqref{eq:4.25*} is the dual equation of the state equation \eqref{eq:MF-SDE}. 		
		
		Then, through further calculations we can easily get
		the stationarity condition
		\begin{equation}
			\mathbf{J}^\top \Big\{ \mathbf{B}(s)^\top\mathbf{Y}^*(s) + \mathbf{\widetilde{B}}(s)^\top\mathbb{E}[\mathbf{Y}^*(s)] + \mathbf{D}(s)^\top\mathbf{Z}^*(s) + \mathbf{\widetilde{D}}(s)^\top\mathbb{E}[\mathbf{Z}^*(s)] + \mathbf{R}\mathbf{I}_m(s)w^*(s) \Big\} = 0,
		\end{equation}
		
		If, in addition, condition (iv) holds, then by Theorem \ref{thm:4.3}, the open-loop  \(H_2/H_\infty\) problem admits an \emph{open-loop Nash equilibrium strategy} \(w^*(\cdot)\), which
		has the closed-loop representation \[w^*(\cdot) = K^*(\cdot)\left(X^*(\cdot) - \mathbb{E}[X^*(\cdot)]\right) + \widetilde{K}^*(\cdot)\mathbb{E}[X^*(\cdot)] + \chi^*(\cdot).\]
		The proof is complete.
	\end{proof}
	
	\begin{rmk}\label{rmk:4.7}
		This theorem indicates the sufficient conditions for the existence of a specific state-feedback form strategy for the open-loop \(H_2/H_\infty\) control problem, and provides a theoretical foundation for evaluating and constructing such control strategies.  Meanwhile, it should be noted that although this condition is sufficient, it is not necessary.  Since the open-loop strategy is related to a specific initial state \(\xi\), and it cannot guarantee that \(\mathbb{E}[X(s)]\) and \(X(s)-\mathbb{E}[X(s)]\) can take arbitrary values at all times \(s\in [t,T]\), thus affecting the solvability of CDREs.
	\end{rmk}

\subsection{Closed-Loop Strategy Analysis}

We investigate in this subsection the closed-loop Nash equilibrium for \textbf{Problem (NSDG-\(H_2/H_\infty\))}, which is formulated under mean-field stochastic dynamics and dual performance constraints. The equilibrium strategy is derived by solving a system of coupled Riccati differential equations along with backward and forward equations accounting for affine terms.

To facilitate the expression of these equations, we introduce the following symbols for representation:
\[
\begin{aligned}
& \varLambda := \gamma^2 I + C_2^\top P_1 C_2, \quad
\bar{\varLambda} := \gamma^2 I + \widetilde{C}_2^\top P_1 \widetilde{C}_2, \\
& \Theta := I + B_2^\top P_2 B_2, \quad
\bar{\Theta} := I + \widetilde{B}_2^\top P_2 \widetilde{B}_2, \\
& \Upsilon := P_1 C_1 + (A_2 + B_2 U)^\top P_1 C_2, \quad
\bar{\Upsilon} := \Pi_1 \widetilde{C}_1 + (\widetilde{A}_2 + \widetilde{B}_2 \widetilde{U})^\top P_1 \widetilde{C}_2, \\
& \Sigma := P_2 B_1 + (A_2 + C_2 V)^\top P_2 B_2, \quad
\bar{\Sigma} := \Pi_2 \widetilde{B}_1 + (\widetilde{A}_2 + \widetilde{C}_2 \widetilde{V})^\top P_2 \widetilde{B}_2, \\
& \Xi := C_2^\top \big[P_1 (B_2(U_0 - \mathbb{E}[U_0]) + \sigma - \mathbb{E}[\sigma]) + \zeta_1 - \mathbb{E}[\zeta_1] \big] + C_1^\top \eta_1, \\
& \bar{\Xi} := \widetilde{C}_2^\top \big[P_1(\widetilde{B}_2 \mathbb{E}[U_0] + \mathbb{E}[\sigma]) + \mathbb{E}[\zeta_1] \big] + \widetilde{C}_1^\top \bar{\eta}_1, \\
& \Psi := B_2^\top \big[P_2(C_2(V_0 - \mathbb{E}[V_0]) + \sigma - \mathbb{E}[\sigma]) + \zeta_2 - \mathbb{E}[\zeta_2] \big] + B_1^\top \eta_2, \\
& \bar{\Psi} := \widetilde{B}_2^\top \big[P_2(\widetilde{C}_2 \mathbb{E}[V_0] + \mathbb{E}[\sigma]) + \mathbb{E}[\zeta_2] \big] + \widetilde{B}_1^\top \bar{\eta}_2.
\end{aligned}
\]

The following two sets of cross-coupled Riccati equations are introduced before the main result is presented:
\begin{equation}\label{eq:5.3}
  \begin{cases}
      \dot{P}_1 + (A_1 + B_1 U)^\top P_1 + P_1 (A_1 + B_1 U) \\
        \qquad+ (A_2 + B_2 U)^\top P_1 (A_2 + B_2 U) - Q^\top Q - U^\top U - \Upsilon \varLambda^{-1} \Upsilon^\top = 0, \\
      \dot{\Pi}_1 + (\widetilde{A}_1 + \widetilde{B}_1 \widetilde{U})^\top \Pi_1 + \Pi_1 (\widetilde{A}_1 + \widetilde{B}_1 \widetilde{U}) \\
        \qquad+ (\widetilde{A}_2 + \widetilde{B}_2 \widetilde{U})^\top P_1 (\widetilde{A}_2 + \widetilde{B}_2 \widetilde{U}) - Q^\top Q - \widetilde{U}^\top \widetilde{U} - \bar{\Upsilon} \bar{\varLambda}^{-1} \bar{\Upsilon}^\top = 0, \\
      P_1(T) = 0, \quad \Pi_1(T) = 0, \quad \Lambda(s) \geq \delta_1 I,\quad \bar{\Lambda}(s) \geq \delta_1 I,\quad \text{for some }
      \delta_1 > 0 \text{ and } \forall s \in [t, T].
  \end{cases}
\end{equation}
	\begin{equation}\label{eq:5.4}
	\begin{cases}
		V=-\varLambda^{-1}\Upsilon^{\top},
		\\\bar{V}=-\bar{\varLambda}^{-1}\bar{\Upsilon}^\top+\varLambda^{-1}\Upsilon^{\top},\qquad\qquad\qquad\qquad\qquad\qquad\qquad\qquad\qquad\qquad\qquad\qquad\qquad\quad
	\end{cases}
\end{equation}
\begin{equation}\label{eq:5.5}
	\begin{cases}
		\dot{P}_2 + (A_1 + C_1 V)^\top P_2 + P_2 (A_1 + C_1 V) + (A_2 + C_2 V)^\top P_2 (A_2 + C_2 V) \\
		\qquad + Q^\top Q - \Sigma \Theta^{-1} \Sigma^\top = 0, \\
		\dot{\Pi}_2 + (\widetilde{A}_1 + \widetilde{C}_1 \widetilde{V})^\top \Pi_2 + \Pi_2 (\widetilde{A}_1 + \widetilde{C}_1 \widetilde{V}) + (\widetilde{A}_2 + \widetilde{C}_2 \widetilde{V})^\top P_2 (\widetilde{A}_2 + \widetilde{C}_2 \widetilde{V}) \\
		\qquad + Q^\top Q - \bar{\Sigma} \bar{\Theta}^{-1} \bar{\Sigma}^\top = 0, \\
		P_2(T) = 0, \quad \Pi_2(T) = 0, \quad \Theta(s) \geq \delta_2 I,\quad \bar{\Theta}(s) \geq \delta_2 I,\quad \text{for some } \delta_2 > 0 \text{ and } \forall s \in [t, T].
	\end{cases}
\end{equation}
\begin{equation}\label{eq:5.6}
	\begin{cases}
		U=-\Theta^{-1}\Sigma^{\top},
		\\\bar{U}=-\bar{\Theta}^{-1}\bar{\Sigma}^\top+\Theta^{-1}\Sigma^{\top}.\qquad\qquad\qquad\qquad\qquad\qquad\qquad\qquad\qquad\qquad\qquad\qquad\qquad\quad
	\end{cases}
\end{equation}

The affine correction terms for the disturbance and control strategies are characterized via the following BSDEs and ODEs.

The first BSDE determines the offset term \((\eta_1, \zeta_1)\) in the worst-case disturbance feedback:
\begin{equation}\label{eq:BSDE-eta1}
	\left\{
	\begin{aligned}
		d\eta_1 = & -\Big[ \big( A_{1} + B_{1}U \big)^{\top} - \Upsilon\varLambda^{-1}C_{1}^{\top} \Big] \eta_1 \\
		& - \Big[ \big( A_{2} + B_{2}U \big)^{\top} - \Upsilon\varLambda^{-1}C_{2}^{\top} \Big] \Big\{ P_{1} \big( B_{2}(U_0 - \mathbb{E}[U_0]) + \sigma - \mathbb{E}[\sigma] \big) + \zeta_1 - \mathbb{E}[\zeta_1] \Big\} \\
		& - P_{1} \big( B_{1}(U_0 - \mathbb{E}[U_0]) + b - \mathbb{E}[b] \big) \, ds + \zeta_1 \, dW(s), \\
		\eta_1(T) = & \ 0,
	\end{aligned}
	\right.
\end{equation}
\begin{equation}\label{eq:V0-definition}
{
V_0(s) = -\varLambda^{-1}(s) \Xi(s) - \bar{\varLambda}^{-1}(s) \bar{\Xi}(s)
}.
\end{equation}

The second BSDE characterizes the correction \((\eta_2, \zeta_2)\) for the optimal control:
\begin{equation}\label{eq:BSDE-eta2}
	\left\{
	\begin{aligned}
		d\eta_2 = & -\Big[ \big( A_{1} + C_{1}V \big)^{\top} - \Sigma\Theta^{-1}B_{1}^{\top} \Big] \eta_2 \\
		& - \Big[ \big( A_{2} + C_{2}V \big)^{\top} - \Sigma\Theta^{-1}B_{2}^{\top} \Big] \Big\{ P_{2} \big( C_{2}(V_0 - \mathbb{E}[V_0]) + \sigma - \mathbb{E}[\sigma] \big) + \zeta_2 - \mathbb{E}[\zeta_2] \Big\} \\
		& - P_{2} \big( C_{1}(V_0 - \mathbb{E}[V_0]) + b - \mathbb{E}[b] \big) \, ds + \zeta_2 \, dW(s), \\
		\eta_2(T) = & \ 0,
	\end{aligned}
	\right.
\end{equation}

\begin{equation}\label{eq:U0-definition}
{
U_0(s) = -\Theta^{-1}(s) \Psi(s) - \bar{\Theta}^{-1}(s) \bar{\Psi}(s)
}.
\end{equation}

The mean-field corrections \(\bar{\eta}_1\) and \(\bar{\eta}_2\) satisfy the following deterministic ODEs.

The first one is associated with the disturbance feedback:
\begin{equation}\label{eq:ode-bar-eta1}
	\left\{
	\begin{aligned}
		\dot{\bar{\eta}}_{1} + & \Big[ (\widetilde{A}_{1} + \widetilde{B}_{1}\widetilde{U})^{\top} - \bar{\Upsilon}\bar{\varLambda}^{-1}\widetilde{C}_{1}^{\top} \Big] \bar{\eta}_{1} \\
		& + \Big[ (\widetilde{A}_{2} + \widetilde{B}_{2}\widetilde{U})^{\top} - \bar{\Upsilon}\bar{\varLambda}^{-1}\widetilde{C}_{2}^{\top} \Big] \Big\{ P_{1} \big( \widetilde{B}_{2}\mathbb{E}[U_0] + \mathbb{E}[\sigma] \big) + \mathbb{E}[\zeta_1] \Big\} \\
		& + \Pi_1 \big( \mathbb{E}[b] + \widetilde{B}_{1}\mathbb{E}[U_0] \big) = 0, \\
		\bar{\eta}_{1}(T) &=  \ 0.
	\end{aligned}
	\right.
\end{equation}

The second governs the mean-field correction of the optimal control:
\begin{equation}\label{eq:ode-bar-eta2}
\left\{
\begin{aligned}
	\dot{\bar{\eta}}_{2} + & \Big[ (\widetilde{A}_{1} + \widetilde{C}_{1}\widetilde{V})^{\top} - \bar{\Sigma}\bar{\Theta}^{-1}\widetilde{B}_{1}^{\top} \Big] \bar{\eta}_{2} \\
	& + \Big[ (\widetilde{A}_{2} + \widetilde{C}_{2}\widetilde{V})^{\top} - \bar{\Sigma}\bar{\Theta}^{-1}\widetilde{B}_{2}^{\top} \Big] \Big\{ P_{2} \big( \widetilde{C}_{2}\mathbb{E}[V_0] + \mathbb{E}[\sigma] \big) + \mathbb{E}[\zeta_2] \Big\} \\
	& + \Pi_2 \big( \mathbb{E}[\sigma] + \widetilde{C}_{1}\mathbb{E}[V_0] \big) = 0, \\
	\bar{\eta}_{2}(T) &=  \ 0.
\end{aligned}
\right.
\end{equation}

\begin{lem}\label{lem:5.1}  Let Assumptions \ref{ass:3.1}-\ref{ass:3.3} holds.
Let \((V(\cdot), \bar{V}(\cdot), V_0(\cdot)) \in \mathcal{M}^2[t, T]\) be a given admissible disturbance strategy, and suppose the disturbance process takes the affine feedback form
\begin{equation}\label{eq:disturbance-form}
v(s) = V(s)X(s) + \bar{V}(s)\mathbb{E}[X(s)] + V_0(s), \quad s \in [t, T].
\end{equation}
Substituting \eqref{eq:disturbance-form} into the state equation \eqref{eq:2.1}, the resulting mean-field LQ optimal control problem with affine terms becomes
\begin{equation}\label{eq:4.22}
	\begin{cases}
	\underset{u(\cdot)\in L^{2}_{\mathbb {F}}(t,T;\mathbb{R}^{n_{u}})}{\min}J_{2}(t,\xi;u(\cdot),v(\cdot))=\underset{u(\cdot)\in L^{2}_{\mathbb {F}}(t,T;\mathbb{R}^{n_{u}})}{\min}{\mathbb{E}\int_{t}^{T}[\|Q(s)X(s)\|^2+\|u(s)\|^2]ds},\\
	dX(s)=\Big\{(A_{1}(s)+C_{1}(s)V(s))X(s)+(\bar{A}_{1}(s)+C_1(s)\bar{V}(s)+\bar{C}_{1}(s)\widetilde{V}(s))\mathbb{E}[X(s)] \\
	\qquad\qquad+C_1(s)V_0(s)+\bar{C}_1(s)\mathbb{E}[V_0(s)]+B_{1}(s)u(s)+\bar{B}_{1}(s)\mathbb{E}[u(s)]+b(s)\Big\}ds\\
	\qquad\qquad+\Big\{(A_{2}(s)+C_{2}V(s))X(s)+(\bar{A}_{2}(s)+C_2(s)\bar{V}(s)+\bar{C}_{2}(s)\widetilde{V}(s))\mathbb{E}[X(s)] \\
	\qquad\qquad+C_2(s)V_0(s)+\bar{C}_{2}(s)\mathbb{E}[V_0(s)]+B_{2}(s)u(s)+\bar{B}_{2}(s)\mathbb{E}[u(s)]+\sigma(s)\Big\}dW(s),\\
	X(t)=\xi\in L^2(\Omega,\mathbb P,\mathcal {F}_t;\mathbb{R}^{n}),~~s\in[t,T].
\end{cases}
\end{equation}
Then the fully coupled Riccati equations \eqref{eq:5.5}--\eqref{eq:5.6} admit solutions
\[
(P_2(\cdot), \Pi_2(\cdot); U(\cdot), \bar{U}(\cdot), U_0(\cdot)) \in C([t,T]; \mathbb{S}^n)^2 \times \mathcal{N}^2[t, T],
\]
and the backward and forward equations admit unique solutions
\[
(\eta_2(\cdot), \zeta_2(\cdot)) \in L^2_{\mathbb{F}}(\Omega; C(t, T; \mathbb{R}^n)) \times L^2_{\mathbb{F}}(t, T; \mathbb{R}^n), \quad \bar{\eta}_2(\cdot) \in L^2(t, T; \mathbb{R}^n).
\]
 And  the mean-field stochastic LQ control problem \eqref{eq:4.22}  exists an unique  optimal control \(u^*(\cdot)\)  given by the feedback form:
\[
\begin{aligned}
u^*(s) =  -\Theta^{-1} \Sigma^\top X^*(s) + \left[ -\bar{\Theta}^{-1} \bar{\Sigma}^\top + \Theta^{-1} \Sigma^\top \right] \mathbb{E}[X^*(s)] - \Theta^{-1} \Psi(s) - \bar{\Theta}^{-1} \bar{\Psi}(s),
\end{aligned}
\]
where \(X^*(\cdot)\) is the optimal state trajectory under \(u^*(\cdot)\).

The corresponding optimal cost is given by
\begin{equation*}
\begin{aligned}
J_2(t, \xi; u^*(\cdot), v(\cdot) = &~ \mathbb{E} \langle P_2(t)(\xi - \mathbb{E}[\xi]) + 2\eta_2(t), \xi - \mathbb{E}[\xi] \rangle + \langle \Pi_2(t)\mathbb{E}[\xi], \mathbb{E}[\xi] \rangle + 2 \langle \bar{\eta}_2(t), \mathbb{E}[\xi] \rangle \\
& + \mathbb{E} \int_t^T \bigg[ \langle P_2(\sigma + C_2 V + \bar{C}_2 \mathbb{E}[V_0]), \sigma + C_2 V + \bar{C}_2 \mathbb{E}[V_0] \rangle \\
&\quad + 2 \langle \eta_2, b - \mathbb{E}[b] + C_1(V_0 - \mathbb{E}[V_0]) \rangle + 2 \langle \zeta_2, \sigma + C_2 V + \bar{C}_2 \mathbb{E}[V_0] \rangle \\
&\quad + 2 \langle \bar{\eta}_2, \widetilde{C}_1 \mathbb{E}[V_0] + \mathbb{E}[b] \rangle -  \langle \Theta^{-1} \Psi, \Psi\rangle - \langle \bar{\Theta}^{-1} \bar{\Psi}, \bar{\Psi} \rangle \bigg] ds.
\end{aligned}
\end{equation*}
\end{lem}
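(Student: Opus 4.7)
The plan is to treat the problem as a standard (non-game) mean-field linear-quadratic optimal control problem with affine terms, once the disturbance has been frozen at the given closed-loop form \eqref{eq:disturbance-form}. Under this substitution the drift and diffusion coefficients of \eqref{eq:4.22} are bounded, the control weight $N_1^\top N_1 = I$ is uniformly positive definite, and the state weight $Q^\top Q$ is positive semidefinite, so the problem fits the framework studied, e.g., in \cite{yong2013linear} and \cite{sun2017mean}. I would therefore first invoke the standard existence theory for mean-field LQ Riccati systems to obtain $(P_2(\cdot),\Pi_2(\cdot))\in C([t,T];\mathbb{S}^n)^2$ solving \eqref{eq:5.5}, with $P_2,\Pi_2\geq 0$ because both $Q^\top Q$ and the control weight $I$ are nonnegative; this immediately gives $\Theta = I+B_2^\top P_2 B_2\geq I$ and $\bar\Theta\geq I$, so the uniform positivity with any $\delta_2\in(0,1]$ is automatic and the feedback gains $(U,\bar U)$ in \eqref{eq:5.6} are well defined and continuous. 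Given $(P_2,\Pi_2,U,\bar U)$, the BSDE \eqref{eq:BSDE-eta2} is a linear BSDE with bounded coefficients and square-integrable free term (coming from $b,\sigma,V_0,\mathbb{E}[V_0]$), which yields a unique $(\eta_2,\zeta_2)\in L^2_{\mathbb{F}}(\Omega;C(t,T;\mathbb{R}^n))\times L^2_{\mathbb{F}}(t,T;\mathbb{R}^n)$; the ODE \eqref{eq:ode-bar-eta2} for $\bar\eta_2$ is then linear, nonhomogeneous and deterministic, so classical ODE theory provides a unique continuous solution.

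For the synthesis of $u^*$ and the value formula, I would carry out the completion-of-squares argument. Decomposing $X(s)=\bigl(X(s)-\mathbb{E}[X(s)]\bigr)+\mathbb{E}[X(s)]$ and $u(s)$ analogously, apply It\^o's formula to the functional
\begin{equation*}
\mathcal{V}(s) := \bigl\langle P_2(X-\mathbb{E}[X]) + 2\eta_2,\, X-\mathbb{E}[X]\bigr\rangle + \bigl\langle \Pi_2\mathbb{E}[X] + 2\bar\eta_2,\, \mathbb{E}[X]\bigr\rangle,
\end{equation*}
along any admissible trajectory of \eqref{eq:4.22}. Integrating from $t$ to $T$, taking expectation, using the terminal conditions $P_2(T)=\Pi_2(T)=\eta_2(T)=\bar\eta_2(T)=0$ and substituting the Riccati identities \eqref{eq:5.5}, the BSDE \eqref{eq:BSDE-eta2} and the ODE \eqref{eq:ode-bar-eta2}, the cross terms in $X-\mathbb{E}[X]$ and $\mathbb{E}[X]$ cancel by construction. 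Adding the cost $J_2$ and rearranging, I expect to obtain the identity
\begin{equation*}
J_2(t,\xi;u(\cdot),v(\cdot)) = \mathcal{R}(t,\xi) + \mathbb{E}\int_t^T \bigl\|u-\mathbb{E}[u] + \Theta^{-1}\Sigma^\top(X-\mathbb{E}[X]) + \Theta^{-1}\Psi\bigr\|^2_{\Theta}\,ds + \int_t^T\bigl\|\mathbb{E}[u] + \bar\Theta^{-1}\bar\Sigma^\top\mathbb{E}[X] + \bar\Theta^{-1}\bar\Psi\bigr\|^2_{\bar\Theta}\,ds,
\end{equation*}
where $\mathcal{R}(t,\xi)$ is precisely the scalar on the right-hand side of the announced optimal-cost formula. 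Since $\Theta,\bar\Theta$ are uniformly positive definite, both square terms are non-negative and vanish simultaneously only when $u(s)-\mathbb{E}[u(s)]=-\Theta^{-1}\Sigma^\top(X-\mathbb{E}[X])-\Theta^{-1}\Psi + \mathbb{E}[\,\cdot\,]$ and $\mathbb{E}[u(s)]=-\bar\Theta^{-1}\bar\Sigma^\top\mathbb{E}[X]-\bar\Theta^{-1}\bar\Psi$; combining these two and reading off the definitions of $U,\bar U,U_0$ in \eqref{eq:5.6} and \eqref{eq:U0-definition} reproduces the feedback expression for $u^*$. Uniqueness then follows from the strict convexity of $J_2$ in $u$ implied by $\Theta\geq I$, $\bar\Theta\geq I$.

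The main obstacle will be the algebraic bookkeeping in the completion-of-squares step, because the mean-field structure forces one to track separately the dynamics of the fluctuation $X-\mathbb{E}[X]$ (driven by $A_1+C_1V,\;A_2+C_2V$, etc.) and of the mean $\mathbb{E}[X]$ (driven by $\widetilde A_1+\widetilde C_1\widetilde V$, etc.), while the affine data $b,\sigma,V_0$ simultaneously appear in the $\eta_2$- and $\bar\eta_2$-dynamics through the inhomogeneous parts; verifying that the drift terms arising from It\^o's formula match exactly the Riccati/BSDE/ODE coefficients $\Sigma\Theta^{-1}\Sigma^\top$, $\bar\Sigma\bar\Theta^{-1}\bar\Sigma^\top$, $\Sigma\Theta^{-1}\Psi$, $\bar\Sigma\bar\Theta^{-1}\bar\Psi$ is the nontrivial computational core. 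Everything else (existence for the Riccati, BSDE, and ODE; positivity; admissibility of $u^*$) is standard once these cancellations are in place.
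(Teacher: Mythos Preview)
Your proposal is correct and follows essentially the same route the paper relies on: the paper does not actually spell out the proof of this lemma but simply refers to \cite{sun2016open}, whose argument is precisely the completion-of-squares approach you describe (Riccati existence from the standard mean-field LQ theory, then linear BSDE/ODE for $(\eta_2,\zeta_2,\bar\eta_2)$, then It\^o applied to $\langle P_2(X-\mathbb{E}[X])+2\eta_2,X-\mathbb{E}[X]\rangle+\langle\Pi_2\mathbb{E}[X]+2\bar\eta_2,\mathbb{E}[X]\rangle$). Your identification of the algebraic bookkeeping as the only nontrivial step is accurate.
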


The proof is omitted here and can be found in \cite{sun2016open}.

\begin{thm}\label{thm:4.9-final}   Let Assumptions \ref{ass:3.1}-\ref{ass:3.3} holds. And
suppose the following conditions hold:
\begin{itemize}
\item[(i)] The system of coupled Riccati differential equations \eqref{eq:5.3}--\eqref{eq:5.6} admits a solution quadruple
\[
(P_1(\cdot), P_2(\cdot), \Pi_1(\cdot), \Pi_2(\cdot)) \in C([t, T]; \mathbb{S}^n)^4,
\]
with corresponding feedback gains
\[
(U(\cdot), \bar{U}(\cdot), U_0(\cdot)) \in \mathcal{N}^2[t, T], \quad (V(\cdot), \bar{V}(\cdot), V_0(\cdot)) \in \mathcal{M}^2[t, T].
\]

\item[(ii)] For each \( i = 1, 2 \), the BSDEs \eqref{eq:BSDE-eta1} and \eqref{eq:BSDE-eta2} admit unique adapted solutions
\[
(\eta_i(\cdot), \zeta_i(\cdot)) \in L^2_{\mathbb{F}}(\Omega; C([t, T]; \mathbb{R}^n)) \times L^2_{\mathbb{F}}(t, T; \mathbb{R}^n),
\]
and the deterministic ODEs \eqref{eq:ode-bar-eta1}--\eqref{eq:ode-bar-eta2} admit unique solutions \(\bar{\eta}_i(\cdot) \in L^2(t, T; \mathbb{R}^n)\).
\end{itemize}

Then, the strategy pair
\[
\left(U(\cdot), \bar{U}(\cdot), U_0(\cdot); V(\cdot), \bar{V}(\cdot), V_0(\cdot)\right)
\]
constitutes
\begin{enumerate}
\item \textbf{A closed-loop Nash equilibrium for Problem (NSDG-\(H_2/H_\infty\))} (as defined in Problem~\ref{def:NSDG}), in the sense that:
\begin{align}
J_1(t,\xi; u^*(\cdot), v^*(\cdot)) &\leq J_1\left(t,\xi; U(\cdot)X(\cdot) + \bar{U}(\cdot)\mathbb{E}[X(\cdot)] + U_0(\cdot), v(\cdot)\right),  \quad \forall v(\cdot) \in L^2_{\mathbb{F}}(t,T;\mathbb{R}^{n_v}), \label{eq:nash1-final} \\
J_2(t,\xi; u^*(\cdot), v^*(\cdot)) &\leq J_2\left(t,\xi; u(\cdot), V(\cdot)X(\cdot) + \bar{V}(\cdot)\mathbb{E}[X(\cdot)] + V_0(\cdot)\right),  \quad \forall u(\cdot) \in L^2_{\mathbb{F}}(t,T;\mathbb{R}^{n_u}). \label{eq:nash2-final}
\end{align}

\item \textbf{An Optimal Closed-Loop Strategy to the mean-field stochastic \(H_2/H_\infty\) control problem} (as defined in Definition~\ref{def:1}), satisfying:\\
(i) The closed-loop system achieves disturbance attenuation with performance level \(\gamma\):
\[
\|\mathbf{L}\| < \gamma.
\]
(ii) The optimal \(H_2\) performance criterion is attained:
\begin{equation}\label{eq:H2-optimality}
\begin{aligned}
J_2(t, \xi; u^*(\cdot), v^*(\cdot)) &\leq J_2\left(t, \xi; u(\cdot), V(\cdot)X(\cdot) + \bar{V}(\cdot)\mathbb{E}[X(\cdot)] + V_0(\cdot)\right), \quad \forall u(\cdot) \in L^2_{\mathbb{F}}(t, T; \mathbb{R}^{n_u}).\\
\end{aligned}
\end{equation}
\end{enumerate}
Here, the optimal closed-loop strategies \(u^*(\cdot)\) and \(v^*(\cdot)\) are given by:
\begin{align}
u^*(s) &= U(s)X^*(s)+  \bar{U}(s)\mathbb{E}[X^*(s)] + U_0(s), \label{eq:equil_u-final} \\
v^*(s) &= V(s)X^*(s)  +  \bar{V}(s)\mathbb{E}[X^*(s)] + V_0(s), \label{eq:equil_v-final}
\end{align}
where \(X^*(\cdot)\) denotes the corresponding state trajectory under the feedback strategies \(u^*(\cdot)\) and \(v^*(\cdot)\).
\end{thm}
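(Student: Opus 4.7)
The plan is to decouple the two Nash inequalities of Problem~\ref{def:NSDG} and use the four CDREs together with the associated BSDE/ODE data as value-function ansatze for each side. I would establish \eqref{eq:nash2-final} first by a direct invocation of Lemma~\ref{lem:5.1}, and then \eqref{eq:nash1-final} by a completion-of-squares argument in the spirit of the proof of Lemma~\ref{lem:3.3}. The strict disturbance attenuation $\|\mathbf{L}\|<\gamma$ (condition~(i) of the optimal closed-loop strategy) will follow from Lemma~\ref{lem:3.3} applied to the closed-loop dynamics.

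For \eqref{eq:nash2-final}, freeze the disturbance at the affine feedback $v(s)=V(s)X(s)+\bar V(s)\mathbb{E}[X(s)]+V_0(s)$ and substitute into \eqref{eq:2.1}. The minimization of $J_2$ over $u(\cdot)$ reduces exactly to the mean-field stochastic LQ problem \eqref{eq:4.22} treated by Lemma~\ref{lem:5.1}. Comparing coefficients, the Riccati pair $(P_2,\Pi_2)$ from \eqref{eq:5.5}, the gains $(U,\bar U)$ from \eqref{eq:5.6}, the BSDE solution $(\eta_2,\zeta_2)$ from \eqref{eq:BSDE-eta2}, the ODE solution $\bar\eta_2$ from \eqref{eq:ode-bar-eta2}, and the affine offset $U_0$ from \eqref{eq:U0-definition} are precisely the quantities produced by Lemma~\ref{lem:5.1}. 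Hence the affine feedback $u^*$ in \eqref{eq:equil_u-final} is the unique optimal control for the frozen problem, which gives both \eqref{eq:nash2-final} and condition~(ii) of the optimal closed-loop strategy.

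For \eqref{eq:nash1-final}, fix the control at $u(s)=U(s)X(s)+\bar U(s)\mathbb{E}[X(s)]+U_0(s)$ and analyze $J_1$ as an indefinite mean-field LQ cost in $v(\cdot)$. The uniform positivity $\Lambda(\cdot),\bar\Lambda(\cdot)\geq\delta_1 I$ from \eqref{eq:5.3} makes the weight on the fluctuation $v-\mathbb{E}[v]$ and on $\mathbb{E}[v]$ strictly positive, so the indefinite problem is well posed. Paralleling the proof of Lemma~\ref{lem:3.3}, I would apply It\^o's formula to
\[
\mathbb{E}\langle P_1(X-\mathbb{E}[X]),X-\mathbb{E}[X]\rangle+2\mathbb{E}\langle\eta_1,X-\mathbb{E}[X]\rangle+\langle\Pi_1\mathbb{E}[X],\mathbb{E}[X]\rangle+2\langle\bar\eta_1,\mathbb{E}[X]\rangle
\]
along any trajectory generated by the frozen $u$ and arbitrary $v$. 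The Riccati equations \eqref{eq:5.3} cancel the terms quadratic in $X$, the BSDE \eqref{eq:BSDE-eta1} absorbs the fluctuation-linear contributions driven by $U_0-\mathbb{E}[U_0]$, $b-\mathbb{E}[b]$ and $\sigma-\mathbb{E}[\sigma]$, and the ODE \eqref{eq:ode-bar-eta1} absorbs the mean-linear contributions. The resulting identity expresses the cost difference $J_1(t,\xi;u_U,v)-J_1(t,\xi;u_U,v^*)$ as a nonnegative $\Lambda$-weighted penalty on $v-\mathbb{E}[v]$ centered at $V(X-\mathbb{E}[X])+(V_0-\mathbb{E}[V_0])$ plus a nonnegative $\bar\Lambda$-weighted penalty on $\mathbb{E}[v]$ centered at $\widetilde V\mathbb{E}[X]+\mathbb{E}[V_0]$, with $V,\widetilde V$ read from \eqref{eq:5.4} and $V_0$ from \eqref{eq:V0-definition}. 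This identifies $v^*$ in \eqref{eq:equil_v-final} as the unique minimizer and yields \eqref{eq:nash1-final}. For the disturbance attenuation, after substituting $u=UX+\bar U\mathbb{E}[X]$ into \eqref{eq:2.1} with $b=\sigma=0$, the resulting closed-loop system has coefficients $(A_1+B_1U,\widetilde A_1+\widetilde B_1\widetilde U,\dots)$ and output augmented by $N_1(UX+\bar U\mathbb{E}[X])$; the Riccati system \eqref{eq:5.3} coincides with the bounded-real Riccati system of Lemma~\ref{lem:3.3} for this closed-loop system, so the condition $\Lambda,\bar\Lambda\geq\delta_1 I$ delivers $\|\mathbf{L}\|<\gamma$ by that lemma.

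The main obstacle is the bookkeeping in the completion-of-squares identity for $J_1$: one must carefully match four distinct affine sources (the offset $U_0$, the drift $b$, the diffusion $\sigma$, and the terminal data $\eta_1(T)=\bar\eta_1(T)=0$) against the drivers of the BSDE \eqref{eq:BSDE-eta1} and ODE \eqref{eq:ode-bar-eta1}, while maintaining a clean mean/fluctuation decomposition so that the cross-terms between $(X-\mathbb{E}[X])$ and $\mathbb{E}[X]$ cancel exactly. A secondary subtlety is the implicit coupling between the two Riccati systems: the gains $(U,\bar U)$ enter \eqref{eq:5.3} while $(V,\bar V)$ enter \eqref{eq:5.5}. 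This mutual dependence, though built into the hypothesis, must be respected when identifying the squared terms with the feedback formulas \eqref{eq:5.4} and \eqref{eq:5.6} and when confirming that the synthesized gains indeed lie in $\mathcal{N}^2[t,T]\times\mathcal{M}^2[t,T]$.
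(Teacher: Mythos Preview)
Your proposal is correct and follows essentially the same route as the paper: Lemma~\ref{lem:5.1} handles \eqref{eq:nash2-final}, a completion-of-squares via It\^o's formula on $\langle P_1\bar X+2\eta_1,\bar X\rangle+\langle\Pi_1\mathbb{E}[X]+2\bar\eta_1,\mathbb{E}[X]\rangle$ handles \eqref{eq:nash1-final}, and Lemma~\ref{lem:3.3} applied to the closed-loop system with $\xi=0$, $b=\sigma=0$ gives $\|\mathbf L\|<\gamma$. The only cosmetic difference is that the paper treats $J_1$ first and $J_2$ second, whereas you reverse the order.
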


\begin{proof}
Suppose the coupled Riccati equations \eqref{eq:5.3}--\eqref{eq:5.6} admit a unique quadruple solution  \((P_1, P_2, \Pi_1, \Pi_2)\), with associated gains \((U, \bar{U})\), \((V, \bar{V})\), and the BSDEs \eqref{eq:BSDE-eta1}--\eqref{eq:U0-definition}, ODEs \eqref{eq:ode-bar-eta1}--\eqref{eq:ode-bar-eta2} admit solutions \((\eta_i, \zeta_i)\), \(\bar{\eta}_i\) with \(U_0 \) and \(V_0\) for \(i = 1, 2\).

Substituting
\[
u(s) = U(s)X(s) + \bar{U}(s)\mathbb{E}[X(s)] + U_0(s),
\]
into the state equation \eqref{eq:2.1} yields the following closed-loop dynamics:
\begin{equation}\label{eq:closed_loop_system}
	\left\{
	\begin{aligned}
		dX(s) = & \Bigl[ (A_{1}(s) + B_{1}(s)U(s)) X(s) + (\bar{A}_{1}(s) + B_1(s)\bar{U}(s) + \bar{B}_{1}(s)\widetilde{U}(s))\mathbb{E}[X(s)] \\
		& \quad + B_1(s)U_0(s) + \bar{B}_1(s)\mathbb{E}[U_0(s)] + C_{1}(s)v(s) + \bar{C}_{1}(s)\mathbb{E}[v(s)] + b(s) \Bigr] ds \\
		& + \Bigl[ (A_{2}(s) + B_{2}(s)U(s)) X(s) + (\bar{A}_{2}(s) + B_2(s)\bar{U}(s) + \bar{B}_{2}(s)\widetilde{U}(s))\mathbb{E}[X(s)] \\
		& \quad +B_2(s)U_0(s)+ \bar{B}_{2}(s)\mathbb{E}[U_0(s)] + C_{2}(s)v(s) + \bar{C}_{2}(s)\mathbb{E}[v(s)] + \sigma(s) \Bigr] dW(s), \quad s \in [t,T], \\
		X(t) = & \ \xi,
	\end{aligned}
	\right.
\end{equation}

 Let \(\bar{X}(s) := X(s) - \mathbb{E}[X(s)]\), \(\bar{Z}(s) := v(s) - \mathbb{E}[v(s)]\), and apply It\^{o}'s formula to the process \(\langle P_1\bar{X} + 2\eta_1, \bar{X} \rangle\) and \(\langle \Pi_1\mathbb{E}[X] + 2\bar{\eta}_1, \mathbb{E}[X] \rangle\). After a detailed computation , we obtain
\begin{equation}\label
{eq:proof-J1-equality}
\begin{aligned}
& J_1(t, \xi; u(\cdot), v(\cdot)) - \mathbb{E}\langle P_1(\xi - \mathbb{E}[\xi]) + 2\eta_1, \xi - \mathbb{E}[\xi] \rangle - \langle \Pi_1(t)\mathbb{E}[\xi], \mathbb{E}[\xi] \rangle - 2\langle \bar{\eta}_1(t), \mathbb{E}[\xi] \rangle \\
=~
& \mathbb{E} \int_t^T \Big
[
\langle \varLambda(\bar{Z} + \varLambda^{-1}\Upsilon^\top\bar{X} + \varLambda^{-1}\Xi), \bar{Z} + \varLambda^{-1}\Upsilon^\top\bar{X} + \varLambda^{-1}\Xi \rangle - \langle \varLambda^{-1}\Xi, \Xi \rangle \\
&\quad + \langle \bar{\varLambda}(\mathbb{E}[v] + \bar{\varLambda}^{-1}\bar{\Upsilon}^\top\mathbb{E}[X] + \bar{\varLambda}^{-1}\bar{\Xi}), \mathbb{E}[v] + \bar{\varLambda}^{-1}\bar{\Upsilon}^\top\mathbb{E}[X] + \bar{\varLambda}^{-1}\bar{\Xi} \rangle - \langle \bar{\varLambda}^{-1}\bar{\Xi}, \bar{\Xi} \rangle \\
&\quad + \langle P_1(\sigma + B_2 U_0 + \bar{B}_2 \mathbb{E}[U_0]), \sigma + B_2 U_0 + \bar{B}_2 \mathbb{E}[U_0] \rangle \\
&\quad + 2\langle \eta_1, b - \mathbb{E}[b] + B_1(U_0 - \mathbb{E}[U_0]) \rangle + 2\langle \zeta_1, \sigma + B_2 U_0 + \bar{B}_2 \mathbb{E}[U_0] \rangle \\
&\quad + 2\langle \bar{\eta}_1, \mathbb{E}[b] + \widetilde{B}_1 \mathbb{E}[U_0] \rangle
\Big
] ds.
\end{aligned}
\end{equation}

Let \(X^*(\cdot)\) be the state trajectory induced by the control pair \((u^*(\cdot), v^*(\cdot))\). By defining the closed-loop strategy as
\begin{equation}\label{eq:4.39}
	\begin{aligned}
		u^*(\cdot) &= U(\cdot)X^*(\cdot) + \bar{U}(\cdot)\mathbb{E}[X^*(\cdot)] + U_0(\cdot), \\
		v^*(\cdot) &= V(\cdot)X^*(\cdot) + \bar{V}(\cdot)\mathbb{E}[X^*(\cdot)] + V_0(\cdot),
	\end{aligned}
\end{equation}
we observe that \( \bar{Z}^*(s) = -\varLambda^{-1} \Upsilon^\top \bar{X}(s) - \varLambda^{-1}\Xi \), and \( \mathbb{E}[v^*(s)] = -\bar{\varLambda}^{-1} \bar{\Upsilon}^\top \mathbb{E}[X(s)] - \bar{\varLambda}^{-1} \bar{\Xi} \). Substituting these into \eqref{eq:proof-J1-equality}, the integral terms vanish, yielding
\begin{equation}
	\begin{aligned}
		J_1(t, \xi; u^*(\cdot), v^*(\cdot))
		=~ & \mathbb{E} \langle P_1(\xi - \mathbb{E}[\xi]) + 2\eta_1, \xi - \mathbb{E}[\xi] \rangle + \langle \Pi_1(t)\mathbb{E}[\xi], \mathbb{E}[\xi] \rangle \\
		& + 2\langle \bar{\eta}_1(t), \mathbb{E}[\xi] \rangle \\
		& + \mathbb{E} \int_t^T \Big[
		\langle P_1(\sigma + B_2 U_0 + \bar{B}_2 \mathbb{E}[U_0]), \sigma + B_2 U_0 + \bar{B}_2 \mathbb{E}[U_0] \rangle \\
		&\quad + 2\langle \eta_1, b - \mathbb{E}[b] + B_1(U_0 - \mathbb{E}[U_0]) \rangle + 2\langle \zeta_1, \sigma + B_2 U_0 + \bar{B}_2 \mathbb{E}[U_0] \rangle \\
		&\quad + 2\langle \bar{\eta}_1, \mathbb{E}[b] + \widetilde{B}_1 \mathbb{E}[U_0] \rangle - \langle \varLambda^{-1} \Xi, \Xi \rangle - \langle \bar{\varLambda}^{-1} \bar{\Xi}, \bar{\Xi} \rangle
		\Big] ds.
	\end{aligned}
\end{equation}

Using the nonnegativity of the squared terms in \eqref{eq:proof-J1-equality}, it follows that for any \(v(\cdot) \in L^2_{\mathbb{F}}(t, T; \mathbb{R}^{n_v})\),
\[
J_1(t, \xi; u^*(\cdot), v^*(\cdot)) \leq J_1(t, \xi;U(\cdot)X(\cdot)+\bar{U}(\cdot)\mathbb{E}[X(\cdot)]+U_0(\cdot), v(\cdot)).
\]
  In fact, substituting $v(\cdot)=V(\cdot)X(\cdot)+\bar{V}(\cdot)\mathbb{E}[X(\cdot)]+V_0(\cdot)$  in system \eqref{eq:2.1}, we get \eqref{eq:2.4}. Then minimizing \(J_2(t,\xi;u(\cdot),v(\cdot))\) is a standard stochastic LQ optimization problem. Considering Lemma \ref{lem:5.1} and closed-loop stategy \eqref{eq:4.39}, we obtain
\[
J_2(t, \xi; u^*(\cdot), v^*(\cdot)) \leq J_2(t, \xi; u(\cdot), V(\cdot)X(\cdot) + \bar{V}(\cdot)\mathbb{E}[X(\cdot)] + V_0(\cdot)),\quad \forall u(\cdot) \in L^2_{\mathbb{F}}(t, T; \mathbb{R}^{n_u}).
\]

Furthermore, the optimal control input \[
\begin{aligned}
	u^*(s) = & \ -\Theta^{-1}\Sigma^{\top}X^*(s) + \biggl\{ -\bar{\Theta}^{-1}\bar{\Sigma}^\top + \Theta^{-1}\Sigma^{\top} \biggr\} \mathbb{E}[X^*(s)]  - \Theta^{-1}\Psi- \bar{\Theta}^{-1}\bar{\Psi},\\=&U(s)X^*(s) + \bar{U}(s)\mathbb{E}[X^*(s)] + U_0(s)
\end{aligned}  \]
can be obtained. Finally, by Lemma~\ref{lem:3.3}, when \(\xi = 0\), \(b(\cdot) = \sigma(\cdot) = 0\), we obtain \(\|\mathbf{L}\| < \gamma\), verifying the \(H_\infty\) attenuation.
\end{proof}

\begin{rmk}
Theorem \ref{thm:4.9-final} indicates that for the \textbf{problem (NSDG-$H_2/H_\infty$)}, the sufficient condition for the existence of the closed-loop Nash equilibrium strategy is the solvability of the fully coupled Riccati equation  \eqref{eq:5.3}--\eqref{eq:5.6}, the BSDEs \eqref{eq:BSDE-eta1}--\eqref{eq:BSDE-eta2}, and the ODEs \eqref{eq:ode-bar-eta1}--\eqref{eq:ode-bar-eta2}. Moreover, as discussed in Remark \ref{rmk:4.7}, in the mean-field setting with affine terms, the solvability of the open-loop does not imply the existence of the closed-loop $H_2/H_\infty$ strategy.
\end{rmk}

	\section{Conclusion}	
In this paper, we studied the mixed $H_2/H_\infty$ control problem for continuous-time mean-field stochastic systems with affine terms over a finite horizon. By establishing a unified framework that integrates mean-field theory and stochastic differential equations, we derived necessary and sufficient conditions for system robustness via the MF-SBRL and characterized both open-loop and closed-loop solvability through coupled Riccati and auxiliary equations. The results reveal a linear feedback structure involving both the state and its mean, thereby extending classical control theory to more complex mean-field settings. This work lays the groundwork for further studies on infinite-horizon problems, nonlinear dynamics, decentralized control, and data-driven methods in large-scale stochastic systems.

\end{document}